\documentclass[12pt]{article}
\newcommand{\subjclass}[2][]{\textbf{Mathematics Subject Classification (#1):} #2}

\usepackage[T1]{fontenc}
\usepackage{graphicx} 

\usepackage{mathtools}
\usepackage{multicol}
\usepackage{units}
\usepackage{faktor}
\usepackage{color}
\usepackage{xcolor}
\usepackage{datetime, color, currfile, bbm, pdfsync}
\usepackage{imakeidx}
\usepackage[subrefformat=parens,labelformat=parens]{subfig}
\makeindex[columns=2, title=Alphabetical Index, intoc, options= -s index-style.ist,columnseprule]

\usepackage{amsmath,amssymb,amsthm,comment}
\usepackage{graphicx}
\usepackage[colorlinks=true, allcolors=black]{hyperref}
\usepackage{enumerate}
\usepackage[intoc,refpage]{nomencl}

\makenomenclature

\usepackage{geometry}
\usepackage[toc,page]{appendix}

\usepackage{tikz-cd,tikz}
\usetikzlibrary{calc,shapes,arrows, positioning, quotes}

\newtheorem{thm}{Theorem}[section]
\newtheorem{prop}[thm]{Proposition}
\newtheorem{lem}[thm]{Lemma}
\newtheorem{cor}[thm]{Corollary}

\newtheorem{notation}{Notation}

\theoremstyle{remark}
\newtheorem{rem}[thm]{Remark}

\newtheorem{exa}[thm]{Example}
\theoremstyle{definition}
\newtheorem{defn}[thm]{Definition}

\newcommand{\Z}{\mathbb{Z}}
\def\K{\mathsf k}
\def\halg{\textnormal{h}_{\textnormal{alg}}}
\def\GK{\textnormal{GKdim}}

\def\F{\mathcal F}
\def\G{\mathcal G}
\def\gr{\operatorname{gr}}
\def\dim{\operatorname{dim}}

\def\H{\operatorname{H}}
\def\span{\operatorname{span}}
\def\Path{\textnormal{Path}}
\def\DIM{\mathsf{DIM}}

\usepackage[
  backend=biber,
  style=numeric,
  sorting=nyt,
  giveninits=true,
  maxbibnames=99,
  doi=true,
  url=false,
  isbn=false,
  eprint=true
]{biblatex}

\renewbibmacro{in:}{%
  \ifentrytype{article}
    {}
    {\printtext{\bibstring{in}\intitlepunct}}%
}

\DeclareFieldFormat
  [article,incollection,inproceedings,book,thesis]
  {title}{\mkbibemph{#1}}

\DeclareFieldFormat[article]{title}{#1}

\DeclareFieldFormat{journaltitle}{\mkbibemph{#1}}
\DeclareFieldFormat{booktitle}{\mkbibemph{#1}}

\DeclareFieldFormat[article]{volume}{\mkbibbold{#1}}

\AtEveryBibitem{%
  \clearlist{language}%
  \clearfield{month}%
  \clearfield{urldate}%
}

\DeclareFieldFormat{doi}{%
  \mkbibacro{DOI}\addcolon\space
  \href{https://doi.org/#1}{\nolinkurl{#1}}%
}

\usepackage{hyperref}
\usepackage{cleveref}

\begin{document}

\title{Growth functions of Algebras and an Application to Leavitt path algebras}

\author{João Schwarz and Alfilgen Sebandal}
\date{}

\maketitle
\begin{abstract}
   In this paper, we consider three growth functions of algebras: the Gelfand-Kirillov dimension, superdimension, and entropy -- as well as a variation of the latter successfully used in the study of Leavitt path algebras. We prove results that make precise the heuristic fact that the Gelfand-Kirillov dimension is suitable for the study of algebras with polynomial growth, the superdimension for algebras with subexponential growth, and the entropy for algebras with exponential growth. We introduce a definition of entropy for graded modules motivated by the entropy for graded algebras. As an application of our study, we give a new criterion for  Leavitt path algebras to be PI. 
\end{abstract}

\subjclass[2020]{16S88, 16P90, 16W50, 16W70 \\

\textbf{Keywords:} Growth of algebras, Gelfand-Kirillov dimension, Gelfand-Kirillov superdimension, Entropy of algebras, Leavitt path algebras.
}

\section{Introduction}

The notion of growth of algebras is very important, with applications to ring theory (see \cite{Krause_Lenagan:Book2000_Growth:ofAlgebrasandGelfandKirillovDimension, McConnel_Robson:Book2001_NoncommutativeNoetherianRings}), as well to representation theory (\cite{Coutinho:BOOK_AprimerofalgebraicDmodules, Jantzen:Book1983_EinhüllendeAlgebrenhalbeinfacherLie-Algebren}), geometry of Riemaniann manifolds (\cite[Chapter 11]{Krause_Lenagan:Book2000_Growth:ofAlgebrasandGelfandKirillovDimension} and reference therein), among others. The theory of growth of groups is also a classical and important subject \cite{geometricgrouptheory}, and recently there has been study of growth of other kinds of algebraic structures, such as  semigroups, languages,   and algebraic operads \cite{bellzelmanov, Khoroshlin-Piontkovski:2018_Ongenratingseriesoffinitelypresentedoperads, trunctionofunitaryoperads}.

We consider in this paper the growth of associative algebras; particularly, the \emph{Gelfand-Kirillov dimension} ($\GK$) and its \emph{superdimension} analogue ($\DIM$) (cf. \cite{Krause_Lenagan:Book2000_Growth:ofAlgebrasandGelfandKirillovDimension, Borho-Kraft:1976_ÜberdieGelfand-Kirillov-Dimension}), as well as  the \emph{entropy} of $\mathbb{N}$-graded algebras ($\H$) \cite{Newman_Schneider_Shalev:2000_TheEntropyofGradedAlgebras}, and a variation of it, the \emph{algebraic entropy} ($\halg$), introduced for filtered algebras in \cite{Bock_Canto_Barquero_Gonzalez_Campos_Sebandal:2024_AlgebraicEntropyofPathAlgebrasandLeavittPathAlgebrasofFiniteGraphs} and applied to the study of \emph{Leavitt path algebras} of finite graphs.

We make precise the remark made in \cite[Introduction]{Newman_Schneider_Shalev:2000_TheEntropyofGradedAlgebras} that the Gelfand-Kirillov dimension is useful for algebras with polynomial growth, its superdimension analogue for algebras with subexponential growth, and the entropy for algebras with exponential growth. For an $\mathbb{N}$-graded algebra $A$, we have that $\H(A)=0$ if and only if $A$ is finite dimensional. We show that as soon as $\H(A)>1$, then it has exponential growth (Theorem \ref{A-has-exponential-growth-2}). In a related result, we show that if $A$ is a filtered algebra and $\halg(A)>0$, then $A$ has exponential growth as well (Theorem \ref{theo:h(A)>0_G(A)exponential}). We generalize the behavior noted in \cite{Bock_Canto_Barquero_Gonzalez_Campos_Sebandal:2024_AlgebraicEntropyofPathAlgebrasandLeavittPathAlgebrasofFiniteGraphs} for Leavitt path algebras to any affine finite-dimensional filtered algebra $A$: either $\GK (A) < \infty$ and $\halg(A)=0$, or $\halg(A)>0$ and $A$ has exponential growth.
In particular, for finite-dimensional filtrations, there is no subexponential growth (however, if the finite-dimensional assumption is dropped, it is well known that subexponential growth exists, for instance, for enveloping algebras; cf. \cite{Smith:1976_UniversalEnvelopingAlgebraswithSubexponentialbutnotPolynomiallyBoundedGrowth}).

An affine unital algebra $A$ has exponential growth if and only if $\DIM (A)=1$. We prove that if $\DIM (A) <1$, $\H(A) =0$ or $1$ (Proposition \ref{proposition-superdimension}); as soon as $\H(A)>1$, necessarily $\DIM (A) =1$ (Corollary \ref{cor:H(A)>1_DIM(A)=1}). However, we can have different algebras with Gelfand-Kirillov superdimension equals to one, but different entropy. Hence the latter cannot distinguish different kinds of exponential growth.

As a consequence of our study, we obtain a new necessary and equivalent condition for a Leavitt path algebra to be a \emph{PI-algebra} in Theorem \ref{pi:new-criterion} (cf. \cite{Bell_Lenagan_Rangaswamy:2016_Leavittpathalgebrassatisfyingapolynomialidentity}).

Finally, we extend the definitions of the entropy and algebraic entropy to graded modules, and obtain results that are reminiscent of basic facts about the Gelfand-Kirillov dimension. Our main result is a dichotomy theorem for the Gelfand-Kirillov dimension and entropy of a graded module (Theorem \ref{entropy-modules-dichotomy}).

All our algebras are associative and unital, over an arbitray base field $\K$. $\otimes$ means $\otimes_\K$. All modules are left modules.

\section{Preliminaries}

\begin{defn}[Graded Algebra] \label{def:grading_algebra}
 Let $\Gamma$ be a monoid. An algebra $A$ is said to be \emph{$\Gamma$-graded} if $A=\oplus_{\gamma\in \Gamma}A_\gamma$, where each $A_\gamma$ is a subspace of $A$ and 
$A_\gamma A_\delta\subseteq A_{\gamma \delta}$ for all $\gamma,\delta \in \Gamma$. The subspace $A_\gamma$ is called a $\gamma$-$homogenous$ $component$ of $A$.  The elements of $\bigcup_{\gamma \in \Gamma} A_\gamma$ are called $homogenous $ $ elements$ of $A$. The nonzero elements of $A_\gamma$ are called $homogenous$ $of$ $degree$ $\gamma$ and we write deg$(a)=\gamma$ for $a\in A_\gamma\setminus \{0\}$. 

We say a grading for $A$ is \emph{finite-dimensional} if each homogenous component is finite dimensional and say that $A$ is a \emph{finitely $\Gamma$-graded algebra}

\end{defn}

\begin{defn}
We say an algebra $A$ is \emph{affine} if $A$ is finitely generated as an algebra.\end{defn}

\begin{rem}
    If $A$ is an affine algebra and \footnote{$\mathbb{N}=\{0,1,2,\dots\}$}$\mathbb{N}$-graded, then $A$ is finitely $\mathbb{N}$-graded.

\end{rem}

\emph{From now on,} all of our graded algebras will be finitely $\mathbb{N}$-graded.

\begin{defn}[Entropy of a Graded Algebra] \cite{Newman_Schneider_Shalev:2000_TheEntropyofGradedAlgebras}  Let  $A=\bigoplus_{n=0}^\infty A_n$ be a graded algebra. The \emph{entropy} of $A$ is defined as
\[
\H(A)=\limsup_{n\rightarrow \infty} \sqrt[n]{\dim (A_n)}.
\]
\end{defn}

\begin{rem}\label{H=0}
The algebra $A$ is finite dimensional if and only if $\H(A)=0$. 
\end{rem}

We are going to use frequently the remark bellow, without further comments.

\begin{rem}\label{radius-of-convergence} Let $A$ be a finitely algebra
and $f(z)=\sum_{i=0}^\infty a_iz^i$, where $a_i= \dim A_i$. Then $\H(A)=\frac{1}{R}$, where $R$ is the \emph{radius of convergence} of the complex function $f(z)$.   
\end{rem}

\begin{notation}\label{nota:V^m}
    Let $A$ be an algebra over a field $\K$ and $V$ a subspace of $A$. We denote $V^0:=\K$   and $V^m:=
\operatorname{span}\{ a_{1}a_{2}\dots a_{m}~|~ a_{j}\in V \}$ for every $m>0$.  
\end{notation}

\begin{defn}[Filtered Algebra] \label{def:filtration}
    An  algebra $A$ is said to be \emph{filtered} if it is endowed with a collection of subspaces $\mathcal{F}=\{V_n\}_{n=0}^\infty$ such that

\begin{enumerate}
    \item [(i)] $0=V_0\subset V_1\subset \dots \subset V_n\subset V_{n+1}\subset \dots\subset  A$,
    \item [(ii)] $\displaystyle\bigcup_{n\geq 0} V_n= A$, 
    \item [(iii)] $V_nV_m\subseteq V_{n+m}.$
\end{enumerate}
An algebra $A$ with a filtration $\mathcal{F}$ is denoted by $(A, \mathcal{F})$. If each $V_i$ is finite dimensional, the filtration $\F$ is called \textit{finite dimensional}. If each quotient space $V_i/V_{i-1}$ is finite dimensional, we say that $\F$ is \textit{quotient-finite dimensional}
. If $V_1$ contains the unit and $V_n=V_1^n$ for each $n\geq 0$, $\mathcal{F}$ is said to be \emph{standard}. 
\end{defn}

\begin{rem}
    If $A$ is affine, then it has a finite-dimensional  standard filtration. 
\end{rem}

\begin{defn}[Graded Algebra Associated to a Filtered Algebra]

Let $(A, \mathcal{F})$ be a filtered algebra with the filtration $\mathcal{F}=\{ V_n \}_{n\geq 0}$. The graded algebra associated with $(A, \mathcal{F})$ is defined to be \[
\operatorname{gr}_\mathcal{F}(A)=\bigoplus_{n\geq 0}{V_{n}/V_{n-1}},\]
with multiplication $(x+ V_{n-1})(y+V_{m-1}):= xy + V_{n+m-1}$ and the convention that $V_{-1}=0$.

\end{defn}

\begin{defn}[Algebraic Entropy of a Filtered Algebra] \cite{Bock_Canto_Barquero_Gonzalez_Campos_Sebandal:2024_AlgebraicEntropyofPathAlgebrasandLeavittPathAlgebrasofFiniteGraphs}
Let $(A, \mathcal{F})$ be a filtered algebra with a quotient-finite dimensional filtration $\mathcal{F}=\{ V_n\}_{n\geq 0}$. The \emph{algebraic entropy} of $(A, \mathcal{F})$ is defined to be 
\[
\halg(A,\mathcal{F}):= \begin{cases}
    0,  &\text{if }A \text{ is finite dimensional},\\
    \displaystyle\limsup_{n\rightarrow \infty}  \frac{\log\dim(V_n/V_{n-1})}{n}  & \text{otherwise}.
\end{cases}
\]
If the context is clear, for a filtered algebra $(A,\mathcal{F})$, we simply write $\halg(A,\mathcal{F})= \halg(A)$.
\end{defn}

\begin{rem}\label{entropy-bridges}
Let $A$ be an algebra with filtration $\F=\{V_n\}_{n \geq 0}$,  $\operatorname{gr}_\F(A)$ its associated graded algebra and $\halg(A, \mathcal{F})>0$. Then $\halg(A, \mathcal{F})=\log \H(\operatorname{gr}_\mathcal{F}(A))$. Thus, $ \H(\operatorname{gr}_\mathcal{F}(A))=e^{\halg(A, \mathcal{F})}>1$. Indeed in \cite{Bock_Canto_Barquero_Gonzalez_Campos_Sebandal:2024_AlgebraicEntropyofPathAlgebrasandLeavittPathAlgebrasofFiniteGraphs}, the algebraic entropy of a filtered algebra is defined by taking the logarithm of the entropy of the corresponding graded algebra in order to compare with the \emph{Gelfand-Kirillov dimension}; see Definition \ref{def:GKdim}.
\end{rem}

\begin{notation}
Let $P$ be a property defined on $\mathbb{N}$. If there exists $n_0 \in \mathbb{N}$ such that $P(n)$ holds for all $n \geq n_0$, we say $P(n)$ holds \emph{for $n \gg 0$} (read: ``for all sufficiently large $n$'').
\end{notation}

\begin{defn}[Growth] 
Let $\Phi$ be the set of functions
 $f: \mathbb{N} \longrightarrow \mathbb{R}$ such for $n\gg0$, 
\[f(n)>0\quad\text{and} \quad f(n+1) \geq f(n).\]
For two functions  $f,g\in \Phi$, we write $f \leq^* g$ if there are constants $c, m \in \mathbb{N}$ such that $f(n) \leq cg(mn)$ for all $n\in \mathbb{N}$. We write $f \sim g$ if $f \leq^* g$ and $g \leq^* f$. It can be shown that $\sim$ is an equivalence relation and we represent the class of $f\in \Phi$ in $\Phi/\sim$ by $\mathcal{G}(f)$, called the \emph{growth of $f$}. Finally, the relation induced by $ \leq^* $ in $\Phi/\sim$ shall be denoted by $\leq$.
\end{defn}
\begin{notation} Let 
\footnote{$\mathbb{R}^{\geq 0}$ is the set of all non-negative real numbers and $\mathbb{R}^{> 0}:=\mathbb{R}^{\geq 0}\setminus \{0\}$.}$y \in \mathbb{R}^{\geq 0}$ and $x\in \mathbb{R}^{>0}$. The growth of the function $n \mapsto n^y$ is denoted $\mathcal{P}_y$ and the growth of the function $n \mapsto e^{n^x}$ is denoted by $\mathcal{E}_x$.
\end{notation}

\begin{rem}
    If $f(n)=\ln(n+1)$, then    $\mathcal{G}(f) > \mathcal{P}_0$ but $\mathcal{G}(f) < \mathcal{P}_\varepsilon$ for any $\varepsilon >0$. So there is no analogue of the Archimedean property for this order. Finally, we remark that not every pair $\mathcal{G}(f)$, $\mathcal{G}(h)$ is comparable. Here, by $\mathcal{G}(f) >\mathcal{G}(g)$, we mean that $f\geq^*g$ is due to the existence of $c,m\in \mathbb{N}$ for which $f(n)>cg(mn)$.
\end{rem}

\section{Algebraic Entropy and GK-Dimensions}

\begin{notation}\label{asymptotic}
For $f \in \Phi$,
\[\gamma(f)=  \limsup_{n\rightarrow \infty} \frac{\log  f(n)}{\log n} = \limsup_{n \to \infty}\big(\log_n \, f(n)\big).\]
\end{notation}

\begin{prop}\label{growth}
    Let $f,g \in \Phi$.
    \begin{enumerate}[{\rm (i)}]
        \item If $\mathcal{G}(f)=\mathcal{G}(g)$, then $\gamma(f)=\gamma(g)$.
        \item $\gamma(f+g)=\sup \{ \gamma(f), \gamma(g) \}$.
        \item If $f(n)=p(n)$ for $n\gg 0$ and $p(x) \in \mathbb{R}[x]$, then $\gamma(f)=\deg \, p$.
        \item If $\gamma(f)=\lim_{n \to \infty} \log_n \, f(n)$, $\gamma(fg)= \gamma(f)+\gamma(g)$.
        \item If $g(n) \leq f(an+b), \, n\gg 0$, where $a,b \in \mathbb{N}$, then $\gamma(g) \leq \gamma(f)$.
    \end{enumerate}
\end{prop}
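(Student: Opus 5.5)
These five items are standard facts about the asymptotic degree, essentially as in Krause--Lenagan. I would prove each directly from the definition of $\gamma$ together with the elementary limit $\log_n(c\,n^k)\to k$. Throughout I use that $f,g\in\Phi$ are eventually positive and nondecreasing, so all logarithms make sense for $n\gg0$.

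I would prove (v) first, since (i) follows from it.

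\emph{Item (v).} Suppose $g(n)\le f(an+b)$ for $n\gg0$, with $a\ge1$; if $a=0$, then $g$ is eventually bounded and $\gamma(g)\le 0$. Set $m=an+b$. Then
\[
\log_n g(n)\le \frac{\log f(m)}{\log m}\cdot\frac{\log m}{\log n},
\]
and $\log(an+b)/\log n\to1$. Taking $\limsup$ gives $\gamma(g)\le\gamma(f)$. One small point: if $\log f(m)<0$ for some $m$, the product still tends correctly, because $f$ is eventually nondecreasing and positive, hence bounded below by a positive constant. The limsup over the subsequence $m=an+b$ is at most the full limsup.

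\emph{Item (i).} If $f\le^* g$, then $f(n)\le c\,g(mn)$. Applying the argument of (v) to $g$ and absorbing the constant via $\log_n c\to0$ gives $\gamma(f)\le\gamma(g)$. By symmetry, equality holds.

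\emph{Item (ii).} Since $f,g\le f+g\le 2\max(f,g)$, monotonicity of $\gamma$ (from (v)) gives $\gamma(f+g)\ge\sup\{\gamma(f),\gamma(g)\}$. For the reverse inequality,
\[
\log_n\big(2\max(f,g)(n)\big)=\log_n 2+\max\big(\log_n f(n),\log_n g(n)\big),
\]
and the limsup of a pointwise max equals the max of the limsups.

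\emph{Item (iii).} Here $p$ must have positive leading coefficient $c$, since $f$ is eventually positive. Then $p(n)=c\,n^d\,(1+o(1))$, so $\log_n p(n)=d+\log_n c+o(1)\to d$.

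\emph{Item (iv).} Write $\log_n(fg)(n)=\log_n f(n)+\log_n g(n)$. Because the first summand converges, the limsup of the sum equals $\lim \log_n f(n)+\limsup\log_n g(n)$, by the standard fact $\limsup(x_n+y_n)=\lim x_n+\limsup y_n$ when $x_n$ converges.

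The main obstacle is only bookkeeping: handling finite versus infinite values of $\gamma$ (in (iv), the case $\gamma(g)=\infty$ is immediate) and the degenerate constants ($a=0$, $c$). None of this presents real difficulty.
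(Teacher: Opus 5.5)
Your argument is correct: each item follows from the definition of $\gamma$ as you describe. You also handle the degenerate cases adequately: the eventual positive lower bound on $f$, the case $a=0$, and the constant in $f\le^* g$. The paper gives no argument of its own here, only citing Krause--Lenagan Lemma 2.1 and McConnell--Robson Lemma 8.1.7, and your direct verification is essentially the standard elementary proof in those references.
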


\begin{proof}
    \cite[Lemma 2.1b]{Krause_Lenagan:Book2000_Growth:ofAlgebrasandGelfandKirillovDimension}, \cite[Lemma 8.1.7]{McConnel_Robson:Book2001_NoncommutativeNoetherianRings}
\end{proof}

\begin{defn}[Gelfand-Kirillov Dimension] \label{def:GKdim}
Let $A$ be an affine algebra and $V$ a finite dimensional generating space for $A$. Let $d_V(n)=\dim \, \left (\sum_{i=0}^n V^i\right )$. Then $\mathcal{G}(d_V)$ is independent of the choice of the generating space $V$ (\cite[Lemma 1.1]{Krause_Lenagan:Book2000_Growth:ofAlgebrasandGelfandKirillovDimension}). Hence,
\[ \gamma(d_V)=\limsup_{n \rightarrow \infty} \big(\log_n(d_V(n))\big),\]
is also independent of $V$, so by Proposition \ref{growth}(i) this number is well-defined and is called the \emph{Gelfand-Kirillov dimension} of $A$, denoted $\GK \, A$. We also put $\mathcal{G}(A)=\mathcal{G}(d_V)$, where $V$ is any choice of finite dimensional generating space for $A$, and call it the \emph{growth of $A$}.
\end{defn}

\begin{defn}[Growth of an algebra]
    Let $A$ be an affine algebra. We say $A$ has 
    \begin{itemize}
        \item \emph{polynomial growth} if $\mathcal{G}(A)=\mathcal{P}_m$ for some natural $m$;
        \item \emph{exponential growth} if $\mathcal{G}(A)=\mathcal{E}_1$;
        \item \emph{subexponential growth} or \emph{intermediate growth} if $\mathcal{G}(A)< \mathcal{E}_1$ but $\mathcal{G}(A) \nleq \mathcal{P}_m$ for all $ m \in \mathbb{N}$.
    \end{itemize}
\end{defn}

It is clear that if $A$ is an affine algebra with $\G(A)=\mathcal{E}_x$ for $x \in (0,1)$, then $A$ has subexponential growth.

\begin{defn}
    Let $A=\bigoplus_{n=0}^\infty A_n$ be an affine graded algebra. Then the formal series  $P_A(t)=\sum_{n=1}^\infty a_n t^n \in\Z[[t]]$ where $a_n=\dim (A_n)$, is the \emph{Poincaré series} of $A$. $\Z[[t]] \hookrightarrow \mathbb{Q}[[t]]$, and we have an embedding $\iota: \mathbb{Q}(t) \hookrightarrow \mathbb{Q}[[t]]$. $P_A(t)$ is said to be \emph{rational} if there exists $q(t) \in \mathbb{Q}(t)$ such that $\iota(q)=P_A$.
\end{defn}

\begin{thm}\label{rational-Poincaré-series}
    If $A$ is an infinite-dimensional affine graded algebra such that $P_A(t)$ is rational, letting $R$ be its radius of convergence, we have two possibilities:

    (a) $R<1$, and $A$ has exponential growth.

    (b) $R=1$, and there is an integer $m \in \mathbb{N}$ with $\GK A=m$.
\end{thm}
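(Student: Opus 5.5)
Since $A$ is an affine graded algebra, I will first reduce to the case where $A$ is generated by finitely many homogeneous elements of positive degree, together with $A_0$. Then $A_0$ is finite dimensional, and each $A_n$ is spanned by products of generators of total degree $n$. Let $d$ be the maximal degree of the chosen generators, and let $V=\bigoplus_{i\le d}A_i$, a finite-dimensional generating subspace containing $1$.

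The key comparison is
\[
\bigoplus_{i\le n}A_i\subseteq V^n\subseteq \bigoplus_{i\le dn}A_i .
\]
The right inclusion is immediate from $A_iA_j\subseteq A_{i+j}$. For the left one, a product of generators of degree $k\le n$ uses at most $k$ generators of positive degree; pad it with the unit. Hence, with $s(n)=\sum_{i\le n}a_i$, we get $s(n)\le d_V(n)\le s(dn)$, and so $\mathcal{G}(A)=\mathcal{G}(s)$. Everything therefore reduces to the growth of the partial sums of the coefficients of $P_A$.

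Now write $P_A=p(t)/q(t)$ in lowest terms with $q(0)=1$. The coefficients $a_n\ge0$ are integers, and infinitely many are nonzero because $A$ is infinite dimensional.

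First, $R\le1$. If $R>1$, the power series would converge at $t=1$, forcing $a_n\to0$. Since the $a_n$ are integers, $a_n=0$ for $n\gg0$, and $A$ would be finite dimensional, a contradiction. Also $R>0$, since $R$ is the smallest modulus of a root of $q$. So either $R<1$ or $R=1$.

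Case $R<1$. By Remark \ref{radius-of-convergence}, $\limsup\sqrt[n]{a_n}=1/R>1$. Hence $a_n\ge \rho^n$ for infinitely many $n$, for some fixed $\rho>1$. To get a genuine exponential lower bound on $s(n)$ for all $n$, I use that the sequence $s$ is monotone. Pringsheim's theorem (nonnegative coefficients) gives that $R$ itself is a pole of $q$. Partial fractions then give $a_n=\sum_j c_j(n)\alpha_j^{n}$ with polynomial $c_j$, where $|\alpha_j|\le 1/R$ and the dominant root is the real $1/R$. From this I will derive $s(n)\ge c\rho^n$ for all $n\gg0$.

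This step needs care, because cancellation among the roots of equal modulus $1/R$ could in principle make $a_n$ small along some residues. The monotonicity of $s$ suffices, however. If $s(N)\ge\rho^N$ along an infinite set of $N$ with bounded gaps, then $s(n)\ge \rho^{n-g}$ for all large $n$. Bounded gaps follow because the exponential-polynomial sequence $\sum_{|\alpha_j|=1/R}c_j(n)(\alpha_jR)^n$ is not eventually zero. By the Skolem--Mahler--Lech theorem (or, more elementarily, a recurrence-based argument), a linear recurrence that is nonzero along infinitely many terms has a nonvanishing term in every window of bounded length. The upper bound $s(n)\le C'\kappa^n$ holds for any $\kappa>1/R$. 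Hence $\mathcal{G}(s)=\mathcal{E}_1$, i.e. exponential growth. I expect this bounded-gap argument to be the main technical obstacle, and would try first the elementary argument: a nonzero linear recurrence of order $r$ cannot have $r$ consecutive zero terms.

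Case $R=1$. Every root of $q$ then has modulus at least $1$. By Kronecker's theorem, since $q$ has integer-related coefficients (write $P_A$ over $\mathbb{Z}$ via Gauss's lemma, $q\in\mathbb{Z}[t]$ primitive with $q(0)=\pm1$ after clearing denominators), the product of the root moduli equals $1/|q(0)|$ times the leading coefficient. Since all root moduli are at least $1$ and this product is at most $1$, all roots have modulus exactly $1$ and are roots of unity. Thus $q$ divides $(1-t^k)^m$ for some $k$, $m$, and $a_n$ is a quasi-polynomial in $n$ of degree at most $m-1$. Then $s(n)$ is a quasi-polynomial whose maximal degree $D$ among its constituents is an integer. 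Since $s$ is monotone and nonnegative, all constituents share the same leading degree $D$ with positive leading coefficient, so $s(n)\asymp n^{D}$. By Proposition \ref{growth}(iii),(v), $\GK A=\gamma(s)=D\in\mathbb{N}$, which gives (b).
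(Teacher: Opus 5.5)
The paper gives no argument of its own for this theorem; it only cites Lorenz. Your reduction to $s(n)=\sum_{i\le n}a_i$, your proof that $0<R\le 1$, and your treatment of case (b) via Fatou's lemma and Kronecker's theorem are sound, apart from a slip noted below. Case (a), however, has a genuine gap, exactly at the step you flagged.

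\textbf{The gap in case (a).} Knowing that $b_n=\sum_{|\alpha_j|=1/R}c_j(n)(\alpha_jR)^n$ has a nonzero term in every window of $r$ consecutive indices is a purely qualitative statement. What you need is $a_N=R^{-N}(b_N+\epsilon_N)\ge\rho^N$, where $\epsilon_N$ is the contribution of the poles of larger modulus. For that, $|b_N|$ must not be exponentially small at the chosen $N$, and a nonzero value of such a sequence can be tiny. For example, take $\omega^n-1$ with $\omega=e^{2\pi i\theta}$ and $\theta$ irrational but so well approximable that $|\omega^n-1|<e^{-n}$ for infinitely many $n$. Neither the ``no $r$ consecutive zeros'' argument nor Skolem--Mahler--Lech addresses this.

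The missing ingredient is a quantitative bound. The window vector $(b_n,\dots,b_{n+r-1})$ equals $M^nv$, where $v\neq 0$ and $M$ is a companion matrix all of whose eigenvalues have modulus $1$. Hence $\|M^{-n}\|$ grows only polynomially, and $\max_{0\le i<r}|b_{n+i}|\ge c\,n^{-r}$. Since $|\epsilon_n|$ decays exponentially and $a_n\ge 0$, this yields $a_{n+i}\ge\rho^{n+i}$ for some $i<r$, for any $\rho\in(1,1/R)$ and all $n\gg 0$.

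\textbf{A shorter repair.} You can bypass Pringsheim, partial fractions and recurrences entirely. Because $V^{m+n}=V^mV^n$, the function $d_V$ is submultiplicative, so by Fekete's lemma $\lim_n d_V(n)^{1/n}$ exists. Since $d_V(n)\ge s(n)\ge a_n$, this limit is at least $\limsup_n a_n^{1/n}=1/R>1$. Hence $d_V(n)\ge\rho^n$ for some $\rho>1$ and all $n\gg0$. This is the same mechanism as the paper's Theorem \ref{convergence-theorem}, which the authors attribute to Lorenz. It also shows that rationality is needed only for (b).

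\textbf{Smaller points.}
\begin{enumerate}
\item In (b) your Vieta formula is inverted. For $q=q_0+\dots+q_kt^k\in\mathbb{Z}[t]$ with $q_0=\pm1$, the product of the root moduli is $|q_0|/|q_k|=1/|q_k|\le 1$. This is what forces all roots onto the unit circle and $q_k=\pm1$. As you wrote it, the product would be $|q_k|\ge 1$, and nothing follows.
\item The fact that $q(0)=\pm1$ is Fatou's lemma. It needs a content argument in $\mathbb{F}_\ell[[t]]$, not just clearing denominators.
\item In the inclusion $\bigoplus_{i\le n}A_i\subseteq V^n$, a product of generators may have arbitrarily many degree-$0$ factors. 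You must absorb runs of degree-$0$ generators, which lie in $A_0\subseteq V$, into adjacent factors.
\end{enumerate}
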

\begin{proof}
\cite[Chapter III, Section 1]{Lorenz:Book1988_Gelfand-KirillovdimensionandPoincareseries}.
\end{proof}

\begin{cor}\label{cor:H(A)=1_iff_GKA<infty}
    If $A$ is an infinite-dimensional affine graded algebra such that $P_A(t)$ is rational, then $\H(A)\geq 1$. In particular, $\H(A)=1$ if and only if $\GK (A)\in \mathbb{N}\setminus \{0\}$ and $\H(A)>1$ if and only if $A$ has exponential growth, which happens if and only if $\GK A=\infty$.

\end{cor}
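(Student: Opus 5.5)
The plan is to reduce everything to Theorem \ref{rational-Poincaré-series} via Remark \ref{radius-of-convergence}, which says $\H(A)=1/R$. Here $R$ is the radius of convergence of $\sum_i \dim(A_i) z^i$. This series differs from $P_A(t)$ only in the constant term $a_0=\dim A_0$, which does not affect the radius of convergence. Since $A$ is infinite-dimensional with rational Poincaré series, Theorem \ref{rational-Poincaré-series} gives exactly two cases: either $R<1$ and $A$ has exponential growth, or $R=1$ and $\GK A=m\in\mathbb{N}$.

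First we get $\H(A)\geq 1$. In case (a), $\H(A)=1/R>1$. In case (b), $\H(A)=1$. Either way $\H(A)\geq 1$. One could also see this directly: the coefficients $a_n$ are nonnegative integers, infinitely many are nonzero, and $A$ is infinite-dimensional, so the series diverges at $z=1$ and $R\leq 1$. The theorem already supplies this, though.

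Next come the equivalences. Since the cases are exhaustive and mutually exclusive ($R<1$ versus $R=1$), we have $\H(A)>1$ iff case (a) holds. Likewise $\H(A)=1$ iff case (b) holds. Two points remain to check.

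\begin{itemize}
\item \emph{In case (b), $m\neq 0$.} An affine algebra with $\GK A=0$ is finite-dimensional. Indeed, if $d_V(n)$ were bounded it would stabilize, since it is a nondecreasing integer sequence. Moreover, $d_V(n)=d_V(n+1)$ forces $\sum_{i\le n}V^i$ to be closed under multiplication by $V$, hence equal to $A$. Bounded growth is automatic if the growth is polynomial of degree $0$; otherwise $d_V(n)\geq n+1$ gives $\gamma\geq 1$. Since $A$ is infinite-dimensional, $m\geq 1$, so $\GK(A)\in\mathbb{N}\setminus\{0\}$.
\item \emph{The converses and the characterisation via $\GK A=\infty$.} If $\GK(A)\in\mathbb{N}\setminus\{0\}$ then $\GK A<\infty$. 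An algebra with exponential growth has $\GK A=\infty$, because $\gamma(e^{n})=\infty$ and $\gamma$ is a growth invariant by Proposition \ref{growth}(i). Hence case (a) is excluded and case (b) holds, so $\H(A)=1$. Similarly, exponential growth implies $\GK=\infty$, which excludes case (b), so case (a) holds and $\H(A)>1$. Finally, $\GK A=\infty$ excludes case (b), hence forces case (a) and exponential growth.
\end{itemize}

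The main subtlety, rather than an obstacle, is the relation between the Poincaré series and the growth function $d_V$. For a graded affine algebra we may take $V=\bigoplus_{i\le k}A_i$ with generators in degrees $\le k$. Then $\sum_{i\le n}A_i\subseteq \sum_{j\le n}V^j$ and $V^n\subseteq \bigoplus_{i\le kn}A_i$. This makes the growth of $A$ equal to that of $n\mapsto\sum_{i\le n}a_i$. It is what Theorem \ref{rational-Poincaré-series} implicitly uses, so once the theorem is invoked the rest is bookkeeping.
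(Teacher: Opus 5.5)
Your proof is correct and is the argument the paper intends: the paper gives no separate proof, treating the corollary as an immediate consequence of the dichotomy for rational Poincar\'e series ($R<1$ with exponential growth, or $R=1$ with $\GK A=m\in\mathbb{N}$) combined with $\H(A)=1/R$ from Remark~\ref{radius-of-convergence}. Your additional checks supply the bookkeeping the paper leaves implicit: that $m\neq 0$ because an affine algebra with $\GK A=0$ is finite-dimensional, and that exponential growth forces $\GK A=\infty$, so the two cases cleanly separate the stated equivalences.
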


\begin{thm}\label{GKdim-filtered}\textnormal{\cite[Proposition 6.6]{Krause_Lenagan:Book2000_Growth:ofAlgebrasandGelfandKirillovDimension}}
Let $(A, \mathcal{F})$ be a filtered algebra with a finite dimensional filtration $\mathcal{F}=\{ V_n\}_{n\geq 0}$. Then $\G(A)=\G(\operatorname{gr}_\F A)=\G(\dim V_n)$ and
\[
\GK (A) =\limsup_{n\rightarrow \infty} \log_n\dim V_n. 
 \]

\end{thm}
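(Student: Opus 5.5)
The plan is to compare three growth functions: $d_V(n)=\dim\sum_{i\le n}V^i$ for a suitable generating space $V$ of $A$, $n\mapsto\dim V_n$, and the analogous function for $\gr_\F A$. I will show all three are $\sim$-equivalent. The formula for $\GK(A)$ then follows from Proposition \ref{growth}(i) together with Definition \ref{def:GKdim}.

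Before starting, I note that the statement needs the implicit hypothesis used in \cite[Proposition 6.6]{Krause_Lenagan:Book2000_Growth:ofAlgebrasandGelfandKirillovDimension}: $\gr_\F A$ must be generated by its components of degree $\le k$ for some $k$, which holds for instance when $\F$ is standard. Without it the claim fails. For example, take $A=\K[x]$ with $V_n$ the polynomials of degree $<2^n$ (this satisfies $V_nV_m\subseteq V_{n+m}$, and $V_0=0$ as required). Then $\dim V_n=2^n$, yet $\GK A=1$. I will therefore work under this finite-generation hypothesis, which is how the result is used later in the paper.

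\emph{Step 1 (graded side).} Since $\dim\bigoplus_{i\le n}V_i/V_{i-1}=\dim V_n$, the growth function of $\gr_\F A$ with respect to the generating space $W=\bigoplus_{i\le k}V_i/V_{i-1}$ is controlled by $\dim V_n$. In one direction, $\sum_{i\le n}W^i\subseteq\bigoplus_{j\le kn}V_j/V_{j-1}$. In the other, every homogeneous element of degree $\le n$ is a sum of products of at most $n$ generators of positive degree, together with scalars. This gives $\dim V_n\le d_W(n)\le\dim V_{kn}$, so $\G(\gr_\F A)=\G(\dim V_n)$.

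\emph{Step 2 (filtered side).} Take $V=\K+V_k$, which is finite dimensional by hypothesis. The inclusion $V^n\subseteq V_{kn}$ (with scalars adjoined) gives $d_V(n)\le \dim V_{kn}+1$. For the reverse bound I will prove by induction on $n$ that $V_n\subseteq\sum_{i\le n}V^i$.

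This induction is where the real work lies. Given $a\in V_n$, its image in $V_n/V_{n-1}$ is a sum of products of homogeneous elements of $\gr_\F A$ of degrees in $[1,k]$ whose degrees add up to $n$. I lift each factor to an element of $V_k$. The corresponding product of lifts then lies in $V^{m}$ with $m\le n$ factors, and the sum of these products differs from $a$ by an element of $V_{n-1}$. The inductive hypothesis handles that remainder. Consequently $\dim V_n\le d_V(n)$, and $V$ generates $A$ because $\bigcup V_n=A$.

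\emph{Step 3.} Combining Steps 1 and 2 gives $\G(A)=\G(d_V)=\G(\dim V_n)=\G(\gr_\F A)$. Proposition \ref{growth}(i) then yields $\GK(A)=\gamma(\dim V_n)=\limsup_{n\to\infty}\log_n\dim V_n$.

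The main obstacle is the lifting step in the induction, and more precisely ensuring the degree bookkeeping stays inside $\sum_{i\le n}V^i$. It is exactly at this point that the generation hypothesis on $\gr_\F A$ is indispensable.
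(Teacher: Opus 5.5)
Your proof is correct. It is the standard argument behind Krause--Lenagan's Proposition~6.6, and the paper gives no proof of its own here, only that citation.

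You are right that the statement as transcribed is false without the hypothesis that $\gr_\F A$ is finitely generated. Already the expression $\G(\gr_\F A)$ only makes sense when $\gr_\F A$ is affine. Your $\K[x]$ example is a genuine counterexample, with one correction: with $V_n=\{f:\deg f<2^n\}$ you get $V_0=\K$, not $0$. If you want to honour the paper's convention $V_0=0$ (which itself clashes with its notion of standard filtration), take $V_n=\{f:\deg f<2^{n-1}\}$ for $n\ge 1$. This still satisfies $V_nV_m\subseteq V_{n+m}$.

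Your remark that the hypothesis matches how the result is used later is too generous. Theorems~\ref{no-finite-GK} and~\ref{theo:h(A)>0_G(A)exponential} apply the result to arbitrary finite-dimensional filtrations of affine algebras. The same example refutes them as literally stated: $\dim(V_n/V_{n-1})$ is of order $2^n$, so $\halg(\K[x],\F)=\log 2>0$, while $\GK(\K[x])=1$. Only the Leavitt path algebra application, where $W_n=W_1^n$ and hence $\gr$ is generated in degrees at most $1$, is covered by your hypothesis.

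Two small repairs to your write-up are needed.

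\emph{Degree-zero factors.} When $V_0\neq\K$ (as for Leavitt path algebras, where the degree-zero part is $\operatorname{span}E^0$), degree-zero generators of $\gr_\F A$ are not scalars. A product of generators of total degree $n$ may then have more than $n$ factors. Before counting factors in Steps~1 and~2, absorb each run of degree-zero factors into an adjacent factor of positive degree, using $V_0V_i\subseteq V_i$ and $V_iV_0\subseteq V_i$.

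\emph{Base case of the induction.} For the same reason, state the induction as $V_n\subseteq\sum_{1\le i\le n}V^i$ for $n\ge 1$, with base case $V_1\subseteq V$. The inclusion $V_0\subseteq V^0=\K$ can fail.

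With these adjustments your bounds $\dim V_n\le d_W(n)\le\dim V_{kn}$ and $\dim V_n\le d_V(n)\le\dim V_{kn}+1$ hold for $n\ge1$, and the conclusion follows exactly as you describe.
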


\begin{rem}
For an affine filtered algebra $(A, \mathcal{F})$ with a finite dimensional filtration $\mathcal{F}=\{ V_n\}_{n\geq 0}$,  $\halg(A,\mathcal{F})\leq \GK(A)$.
\end{rem}

\begin{thm}\label{no-finite-GK}
Let $A$ be an affine algebra with finite dimensional filtration. If $\halg(A) > 0$, then $\GK(A) = \infty$.
\end{thm}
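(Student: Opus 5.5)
Let $\F=\{V_n\}_{n\geq 0}$ be the finite dimensional filtration and set $h=\halg(A,\F)>0$. The plan is to show that $\dim V_n$ grows at least exponentially along a subsequence. That will force $\limsup_n \log_n \dim V_n=\infty$, and Theorem \ref{GKdim-filtered} then identifies this quantity with $\GK(A)$.

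\textbf{Step 1.} Since $h>0$, $A$ is infinite dimensional. By definition,
\[
h=\limsup_{n\to\infty}\frac{\log\dim(V_n/V_{n-1})}{n}.
\]
Fix $0<\varepsilon<h$. Then there are infinitely many $n$ with $\dim(V_n/V_{n-1})\geq e^{(h-\varepsilon)n}$. Since $V_{n-1}\subseteq V_n$, we have $\dim V_n\geq \dim(V_n/V_{n-1})$. Hence $\dim V_n\geq e^{cn}$ for infinitely many $n$, where $c=h-\varepsilon>0$.

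\textbf{Step 2.} For those $n$,
\[
\log_n\dim V_n\geq \frac{cn}{\log n}\longrightarrow\infty .
\]
Therefore $\limsup_{n}\log_n\dim V_n=\infty$. Theorem \ref{GKdim-filtered} gives $\GK(A)=\limsup_n\log_n\dim V_n$, and we conclude $\GK(A)=\infty$.

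\textbf{Main obstacle.} The only delicate point is justifying that Theorem \ref{GKdim-filtered} applies. It requires $A$ affine with a finite dimensional filtration; this is exactly our hypothesis, and finite dimensionality of each $V_n$ is what makes $\dim V_n$ finite in Step 1. If one wished to avoid that theorem, the alternative would be a direct comparison with a generating space. Take $W$ finite dimensional generating with $W\subseteq V_k$ for some $k$. Conversely, each $V_n$ would need to lie in some $\sum_{i\le mn}W^i$, and that containment is precisely the content of Theorem \ref{GKdim-filtered}. So I would simply invoke the theorem.
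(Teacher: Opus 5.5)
Your argument is the paper's own, run directly rather than by contradiction. The paper assumes $\GK(A)=\kappa<\infty$, uses Theorem~\ref{GKdim-filtered} to bound $\dim V_n$ polynomially, and gets $\halg(A)=0$. You instead bound $\dim V_n\ge\dim(V_n/V_{n-1})\ge e^{cn}$ along a subsequence and read off $\limsup_n\log_n\dim V_n=\infty$. This also avoids the paper's slip of writing $\dim V_n\le n^{\kappa}$ where only $n^{\kappa+\varepsilon}$ is available. But the point you flagged as the main obstacle is a genuine gap, and the paper's proof has it too. The identity $\GK(A)=\limsup_n\log_n\dim V_n$ needs exactly the containment you wrote down: $V_n\subseteq\sum_{i\le mn}W^i$ for a finite-dimensional generating space $W$ and a fixed $m$. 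That containment does not follow from the $V_n$ being finite dimensional. Theorem~\ref{GKdim-filtered} is stated without any hypothesis of this kind, so invoking it does not close the gap, and in that generality the theorem is false.

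Here is a counterexample. Let $A=\K[x]$, $V_0=0$, and $V_n=\operatorname{span}\{x^i : 0\le i\le 2^n\}$ for $n\ge 1$. Since $2^n+2^m\le 2^{n+m}$ for $n,m\ge 1$, we get $V_nV_m\subseteq V_{n+m}$, so this is a finite-dimensional filtration of an affine algebra. For $n\ge 2$ we have $\dim(V_n/V_{n-1})=2^{n-1}$, so $\halg(A,\F)=\log 2>0$. Yet $\GK(A)=1$, while $\limsup_n\log_n\dim V_n=\infty$, so the two quantities really differ. So the statement as written is false, and no argument using only finite-dimensionality of the $V_n$ can prove it. It needs an extra hypothesis tying $\F$ to a generating space. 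One option is that $\F$ is standard ($V_n=V_1^n$ with $1\in V_1$), as for the filtration of $L_{\K}(E)$ in Definition~\ref{def:LPAstandardfiltration}. Another is that $\gr_{\F}(A)$ is finitely generated; lifting homogeneous generators then gives $V_n\subseteq\sum_{i\le mn}W^i$. Under such a hypothesis $\GK(A)=\limsup_n\log_n\dim V_n$ does hold, and your Steps~1--2 go through verbatim.
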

\begin{proof}
    Let $\F=\{V_n\}_{n \geq 0}$ be a finite dimensional filtration for $A$. We have $\dim V_n \geq \dim (V_n/V_{n-1})$. Suppose we had $\GK(A)=\kappa<\infty$. The by \cite[Lemma 2.1a)]{Krause_Lenagan:Book2000_Growth:ofAlgebrasandGelfandKirillovDimension} and Theorem \ref{GKdim-filtered} there is an $n_0 \in \mathbb{N}$ such that for all $n \geq n_0$, $\dim(V_n) \leq n^\kappa$. Hence
    \[ \halg(A) = \limsup_{n \to \infty} \frac{\log(\dim(V_n/V_{n-1}))}{n} \leq \limsup_{n \to \infty} \kappa \frac{\log n}{n}=0.\]

    This leads to a contradiction. Hence $\GK(A)=\infty$.
\end{proof}

\begin{rem} \label{rem:halg(A)=log~m}
Let $A=\K\langle x_1, \ldots, x_m \rangle$ be the free algebra in $m$ generators. Then $A$ is $\mathbb{N}$-graded given by the length of words, that is, $A=\bigoplus_{i=0}^\infty V^i$ where $V=\operatorname{span}\{x_1, x_2, \dots, x_m\}$; see Notation \ref{nota:V^m}. Thus,  $\H(\operatorname{gr}_\mathcal{F}(A))=m$ where $\mathcal{F}$ is the natural filtration given by the grading in $A$. Thus,  $\halg(A)=\log(H(\operatorname{gr}_\mathcal{F}(A)))= \log m$. 
\end{rem}

\begin{defn}
    Let $A=\bigoplus_{i=0}^\infty A_i$ be an $\mathbb{N}$-graded algebra. The \emph{natural filtration} for $A$ is $\F=\{A_{(n)}\}_{n \geq 0}$ where $A_{(n)}=\bigoplus_{i=0}^n A_i$. 
\end{defn}

For the (Leavitt) path algebra associated to a finite graph, the following bound is weaker than \cite[Proposition 4.5 and Corollary 4.6]{Bock_Canto_Barquero_Gonzalez_Campos_Sebandal:2024_AlgebraicEntropyofPathAlgebrasandLeavittPathAlgebrasofFiniteGraphs} but generalizes to an arbitrary affine algebra.  See Definition \ref{def:LPA} for the definition of Leavitt path algebra and Definition \ref{def:LPAstandardfiltration} for its standard filtration considered in \cite{Bock_Canto_Barquero_Gonzalez_Campos_Sebandal:2024_AlgebraicEntropyofPathAlgebrasandLeavittPathAlgebrasofFiniteGraphs}. 

\begin{cor}
Let $(A,\F)$ be an affine algebra with a finite dimensional standard filtration $\F =\{V_n\}_{n\geq 0}$ where $V_1$ is an $m$-dimensional vector space. Then $\halg(A) \leq \log(m)$.
\end{cor}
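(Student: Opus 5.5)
The plan is to bound $\dim(V_n/V_{n-1})$ crudely by $\dim V_n$, and then bound $\dim V_n$ using that the filtration is standard. If $A$ is finite dimensional, then $\halg(A)=0\leq \log m$ by definition. Here $m\geq 1$, since $V_1$ contains the unit. So assume $A$ is infinite dimensional.

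Since $\F$ is standard, $V_n=V_1^n$ for all $n\geq 1$. Fix a basis $e_1,\dots,e_m$ of $V_1$. By Notation \ref{nota:V^m}, $V_1^n$ is spanned by products $a_1\cdots a_n$ with $a_j\in V_1$. Expanding each $a_j$ in the basis and using multilinearity, $V_1^n$ is spanned by the $m^n$ monomials $e_{i_1}e_{i_2}\cdots e_{i_n}$. Hence $\dim V_n\leq m^n$. We also have $\dim(V_n/V_{n-1})\leq \dim V_n\leq m^n$, as already used in the proof of Theorem \ref{no-finite-GK}.

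Taking logarithms gives $\frac{\log\dim(V_n/V_{n-1})}{n}\leq \log m$ for every $n$ with $V_n\neq V_{n-1}$. Passing to the $\limsup$ gives $\halg(A)\leq \log m$.

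One caveat concerns indices $n$ where $V_n=V_{n-1}$, so that $\log 0$ appears. These contribute $-\infty$ or are ignored, so they cannot increase the $\limsup$. Infinite dimensionality of $A$ guarantees that infinitely many terms are finite, so the $\limsup$ is a genuine value bounded by $\log m$.

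Equivalently, one can phrase the argument through Remark \ref{entropy-bridges}: $\H(\gr_\F A)\leq \limsup \sqrt[n]{m^n}=m$. The free algebra in Remark \ref{rem:halg(A)=log~m} shows the bound is attained, but that is not needed here.

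There is no real obstacle. The only point needing care is the spanning-set count $\dim V_1^n\leq m^n$, which is immediate from multilinearity of the product.
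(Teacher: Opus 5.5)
Your proof is correct, but it reaches the bound by a direct count rather than by the paper's route. The paper's proof is a one-line citation. It combines the free-algebra computation $\halg(\K\langle x_1,\dots,x_m\rangle)=\log m$ with \cite[Lemma~4.1]{Bock_Canto_Barquero_Gonzalez_Campos_Sebandal:2024_AlgebraicEntropyofPathAlgebrasandLeavittPathAlgebrasofFiniteGraphs}. In other words, it gets the bound by comparing $(A,\F)$ with the free algebra on $m$ generators, which maps onto $A$ by sending the $x_i$ to a basis of $V_1$.

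You instead bound $\dim V_1^n\le m^n$ by counting spanning monomials, and then use the crude estimate $\dim(V_n/V_{n-1})\le\dim V_n$. The $m^n$ monomials are exactly the images of the degree-$n$ words of the free algebra, so the underlying idea is the same. Your version is self-contained and elementary. The paper's version is shorter given the cited literature and presents the bound as an instance of a general comparison with the free algebra.

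Two small comments. First, for a standard filtration, $V_n=V_{n-1}$ forces $V_{n+1}=V_1V_n=V_1V_{n-1}=V_n$. So when $A$ is infinite dimensional, your $\log 0$ caveat never actually arises.

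Second, and more important, your aside that the free algebra attains the bound is not right. Because $1\in V_1$, the free algebra on $m$ generators has a standard first piece of dimension $m+1$, not $m$. Write $V_1=\K 1\oplus U$ with $\dim U=m-1$. Then $V_n=\sum_{i\le n}U^i=V_{n-1}+U^n$, hence $\dim(V_n/V_{n-1})\le(m-1)^n$. This gives $\halg(A)\le\log(m-1)$ for $m\ge 2$, with equality for the free algebra on $m-1$ generators. So $\log m$ is never attained for $m\ge 2$. This does not affect the validity of your proof of the stated (weaker) bound.
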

\begin{proof}
    By Remark \ref{rem:halg(A)=log~m} and \cite[Lemma 4.1]{Bock_Canto_Barquero_Gonzalez_Campos_Sebandal:2024_AlgebraicEntropyofPathAlgebrasandLeavittPathAlgebrasofFiniteGraphs}.
\end{proof}

The following is an improvement of a result proven in \cite{Smith:1976_UniversalEnvelopingAlgebraswithSubexponentialbutnotPolynomiallyBoundedGrowth}. We follow ideas in \cite{Lorenz:Book1988_Gelfand-KirillovdimensionandPoincareseries}.


\begin{thm}\label{convergence-theorem} 
Let $A=\bigoplus_{i=0}^\infty A_i$ be an affine finitely $\mathbb{N}$-graded algebra and consider its natural filtration $\F=\{A_{(n)}\}_{n \geq 0}$. Then  $\lim_{n \to \infty} (\dim A_{(n)})^{1/n}$ exists.
\end{thm}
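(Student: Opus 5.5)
The plan is to show that a shifted version of the sequence $a(n):=\dim A_{(n)}$ is submultiplicative and then apply Fekete's lemma. Note first that $1 \in A_0$, so $a(n)\geq 1$ for all $n$. Every $a(n)$ is finite because the grading is finite-dimensional. Hence $\log a(n)$ is a well-defined, non-negative, nondecreasing sequence.

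\emph{Step 1: homogeneous generators of bounded degree.} Since $A$ is affine, pick a finite generating set. Replace each generator by its (finitely many) homogeneous components; these still generate $A$. Let $d\geq 1$ be the maximum degree among these homogeneous generators $g_1,\dots,g_s$. Then $A$ is spanned by the monomials $g_{i_1}\cdots g_{i_r}$ (including the empty monomial $1$). Each such monomial is homogeneous of degree $\sum_j \deg g_{i_j}$. Consequently $A_{(N)}$ is spanned by the monomials of total degree $\leq N$.

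\emph{Step 2: the key inclusion $A_{(n+m)}\subseteq A_{(n)}A_{(m+d)}$.} Take a monomial $w=g_{i_1}\cdots g_{i_r}$ of degree $\leq n+m$.
\begin{itemize}
\item If $\deg w\leq n$, then $w=w\cdot 1\in A_{(n)}A_{(m+d)}$.
\item Otherwise, let $u=g_{i_1}\cdots g_{i_k}$ be the longest prefix with $\deg u\leq n$, and write $w=uv$. Adding the next factor $g_{i_{k+1}}$, of degree $\leq d$, pushes the degree above $n$. Hence $\deg u> n-d$, and so $\deg v<m+d$.
\end{itemize}
In both cases $w\in A_{(n)}A_{(m+d)}$. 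Since the product of subspaces is spanned by products of spanning vectors, this gives $a(n+m)\leq a(n)\,a(m+d)$.

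\emph{Step 3: Fekete.} Define $b(n):=a(n+d)$. Using Step 2 and monotonicity of $a$,
\[
b(n+m)=a(n+m+d)\leq a(n)\,a(m+d)\leq a(n+d)\,a(m+d)=b(n)\,b(m).
\]
So $\log b$ is subadditive and $\geq 0$. By Fekete's lemma, $\lim_n \frac{\log b(n)}{n}=\inf_n\frac{\log b(n)}{n}=:L$ exists and is finite. Finally, for $n>d$,
\[
\frac{\log a(n)}{n}=\frac{n-d}{n}\cdot\frac{\log b(n-d)}{n-d},
\]
which tends to $L$. Hence $\lim_{n\to\infty}(\dim A_{(n)})^{1/n}=e^{L}$ exists.

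The only genuinely delicate point is Step 2. When generators are not all of degree $1$, naive submultiplicativity $a(n+m)\leq a(n)a(m)$ can fail. The "longest prefix" splitting, which costs at most a shift by $d$, is what repairs this; the shift is then absorbed harmlessly by passing to $b$.
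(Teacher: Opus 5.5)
Your argument is essentially the paper's. Both proofs split a monomial in homogeneous generators at the point where its prefix degree crosses a threshold, which gives submultiplicativity up to a bounded shift. Both then run a Fekete-type argument: the paper does this by hand, bounding $\limsup$ by $\liminf$ via $\dim A_{(km)}\le(\dim A_{(m+c)})^k$, while you shift to $b(n)=a(n+d)$ and quote Fekete's lemma, which is cleaner.

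There is one slip in Step 3. The middle inequality $a(n+m+d)\le a(n)\,a(m+d)$ is not what Step 2 gives, and it is false in general. For $A=\K[x]$ with $\deg x=2$ (so $d=2$) and $n=m=1$, it reads $3\le 2$. Instead, apply Step 2 with $n+d$ in place of $n$. This yields $b(n+m)=a(n+d+m)\le a(n+d)\,a(m+d)=b(n)\,b(m)$ directly, with no appeal to monotonicity.
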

\begin{proof}
For some $c \in \mathbb{N}$, $A_{(c)}$ generates $A$ as an algebra. We claim that for each $m,n$,
\[ (\dagger) \qquad  A_{(m+n)} \subset \sum_{i=0}^{c-1} A_{(m+i)} A_{(n-i)}.\]
We prove this by induction in $m$, the case $m=0$ being clear. The problem is reduced to showing that $(\ddagger) \, A_{m+n} \subset \sum_{i=0}^{c-1} A_{m+i} A_{n-i}$. Let us fix algebra generators  $x_1, \ldots, x_s$, $\operatorname{deg} x_i \leq c$. Let $\mu=x_{i_1} \ldots x_{i_v}$ be a nonzero element of degree $m+n$. Pick $u \leq v$ minimal with respect that $\nu=x_{i_1}\ldots x_{i_u}$ having degree greater or equal than $m$. Then
\[ m \leq \operatorname{deg} \nu \leq m +\operatorname{deg}(x_{i_u})-1 \leq m+c-1.\]

So $\mu \in A_{\operatorname{deg}(\nu)} A_{m+n-\operatorname{deg}(\nu)}$,  which proves $(\ddagger)$, and hence $(\dagger)$. So $\dim A_{(n+m)} \leq \dim A_{(n+c)} \dim A^m$, for all $m ,n \in \mathbb{N}$, and so $\dim A_{(kn)} \leq (\dim A_{(n+c)})^k$ for all $n,k \in \mathbb{N}$.

Now fix $m$, and choose $n$ large enough such that, with $k=\lfloor \frac{n}{m} \rfloor+1$, $\dim A_{(n)} \leq \dim A_{(mk)} \leq (\dim A_{{(m+c)}})^k \leq (\dim A_{(m+c)})^{(\frac{n}{m}+1)}$.

Hence $\limsup_{n \to \infty} (\dim A_{(n)})^{1/n} \leq \limsup_{n \to \infty}(\dim A_{(m+c)})^{\frac{1}{m}+\frac{1}{n}}= (\dim A_{(m+c))})^{\frac{1}{m}} $.

Hence $\limsup_{n \to \infty} (\dim A_{(n)})^{1/n} \leq \liminf_{m \to \infty} (\dim A_{(m+c)})^{1/m}=\liminf_{m \to \infty} (\dim A_{(m)})^{1/m} $.

Hence $\lim_{n \to \infty} (\dim A_{(n)})^{1/n}$ exists.
\end{proof}

The following is an important result as a consequence, which strengthens Theorem \ref{no-finite-GK}.

\begin{thm}\label{theo:h(A)>0_G(A)exponential}
    Let $A$ be an affine algebra with a finite dimensional filtration. If $\halg(A) > 0$, then $\G(A)$ is exponential.
\end{thm}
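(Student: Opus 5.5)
The plan is to show $\mathcal{E}_1\le\G(A)$ and $\G(A)\le\mathcal{E}_1$. The upper bound is standard: if $V$ is a finite-dimensional generating subspace containing $1$, then $d_V(n)\le(\dim V)^n\le e^{n\log\dim V}$. Hence $d_V\le^* e^{n}$, since with $m\ge\log\dim V$ we get $d_V(n)\le e^{mn}$.

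The work is in the lower bound, i.e.\ producing constants $c,m$ with $e^n\le c\,d_V(mn)$. Let $\F=\{V_n\}_{n\geq 0}$ be the given finite-dimensional filtration. By Theorem \ref{GKdim-filtered}, $\G(A)=\G(\dim V_n)$, so it suffices to compare $n\mapsto\dim V_n$ with $e^n$. Since $\halg(A)=h>0$, $A$ is infinite dimensional and
\[
\limsup_{n\to\infty}\frac{\log\dim(V_n/V_{n-1})}{n}=h .
\]
As $\dim V_n\ge\dim(V_n/V_{n-1})$, this gives infinitely many $n$ with $\dim V_n\ge e^{hn/2}$. Only a limsup is available, though, while $\le^*$ requires a bound for all $n$. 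So the main step is to upgrade the limsup to a lim.

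For this I would pass to $B=\gr_\F(A)$, a finitely $\mathbb{N}$-graded algebra, and take its natural filtration $B_{(n)}=\bigoplus_{i\le n}V_i/V_{i-1}$. Then $\dim B_{(n)}=\dim V_n$. If $B$ is affine, Theorem \ref{convergence-theorem} shows that $L=\lim_n(\dim V_n)^{1/n}$ exists. The subsequence above gives $L\ge e^{h/2}>1$; in fact $L\ge e^h$ by Remark \ref{entropy-bridges}. Consequently $\dim V_n\ge \delta^n$ for all $n\gg0$, where $\delta=e^{h/2}>1$. Choosing $m\in\mathbb{N}$ with $\delta^m\ge e$ yields $e^n\le \dim V_{mn}$ for $n\gg0$, and enlarging $c$ handles the finitely many small $n$. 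Thus $\mathcal{E}_1\le\G(\dim V_n)=\G(A)$, and with the upper bound, $\G(A)=\mathcal{E}_1$.

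The obstacle I expect is the affineness of $\gr_\F(A)$, which Theorem \ref{convergence-theorem} needs; an arbitrary finite-dimensional filtration of an affine algebra need not have an affine associated graded algebra. My first approach is to avoid $\gr$ and rerun the submultiplicativity argument of Theorem \ref{convergence-theorem} on $d_V$ directly. Here $d_V(m+n)\le d_V(m)\,d_V(n)$ holds trivially because $\sum_{i\le m+n}V^i=(\sum_{i\le m}V^i)(\sum_{i\le n}V^i)$. Fekete's lemma then shows that $\lim d_V(n)^{1/n}$ exists for a standard filtration. To connect with $\F$, pick $k$ with $V\subseteq V_k$, so $d_V(n)\le\dim V_{kn}$, and conversely $V_n\subseteq\sum_{i\le N(n)}V^i$ for some function $N$. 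If $N$ can be taken linear, the exponential lower bound along a subsequence transfers to $d_V$, and convergence of $d_V(n)^{1/n}$ then gives a lower bound for all $n$. If linear control of $N$ fails, I would instead verify that in the intended applications (standard filtrations, as for Leavitt path algebras) $\gr_\F(A)$ is generated by the image of $V_1$, so Theorem \ref{convergence-theorem} applies.
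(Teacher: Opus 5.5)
Your main line is the same as the paper's proof. You pass to $\gr_\F(A)$, whose natural filtration has dimensions $\dim V_n$. You apply Theorem~\ref{convergence-theorem} to get $\dim V_n\ge C^n$ for all large $n$, and you finish with Theorem~\ref{GKdim-filtered}.

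The obstacle you flag is real, and the paper's proof passes over it without comment. It cannot be removed, because the statement is false for an arbitrary finite-dimensional filtration. Take $A=\K[x]$, $V_0=0$ and $V_n=\{p:\deg p\le 2^n-1\}$ for $n\ge 1$.
\begin{itemize}
\item Since $2^n+2^m-2\le 2^{n+m}-1$ for $n,m\ge 1$, this is a finite-dimensional filtration.
\item We have $\dim(V_n/V_{n-1})=2^{n-1}$ for $n\ge 2$, so $\halg(A,\F)=\log 2>0$.
\item Yet $\GK(A)=1$, so $\G(A)=\mathcal{P}_1$.
\end{itemize}
This example breaks every step you were unsure of:
\begin{itemize}
\item $\gr_\F(A)$ is not affine: products of lower components reach only the class of $x^{2^{n-1}}$ in $V_n/V_{n-1}$.
\item For $V=\span\{1,x\}$, your function $N$ is forced to be $N(n)=2^n-1$, so linear control fails.
\item The identity $\G(A)=\G(\dim V_n)$ that you take from Theorem~\ref{GKdim-filtered} fails, since $\G(\dim V_n)=\mathcal{E}_1$ while $\G(A)=\mathcal{P}_1$. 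That theorem also needs a hypothesis such as affineness of $\gr_\F(A)$.
\end{itemize}

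So no argument can close your branch ``linear control of $N$ fails.'' What is true is the theorem under the added hypothesis that $V_n\subseteq\sum_{i\le Cn}V^i$ for some constant $C$. This holds when $\F$ is standard; for example, the Leavitt path algebra filtration has $1\in W_1$ and $W_n=W_1^n$. More generally, it holds whenever $\gr_\F(A)$ is affine.

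Under that hypothesis, your Fekete argument is a complete proof, and it is leaner than the paper's:
\begin{itemize}
\item Submultiplicativity of $d_V$ gives that $L=\lim_n d_V(n)^{1/n}$ exists.
\item The inequalities $e^{hn/2}\le\dim V_n\le d_V(Cn)$ hold for infinitely many $n$, and they force $L\ge e^{h/(2C)}>1$.
\item You need neither Theorem~\ref{convergence-theorem} applied to $\gr_\F(A)$ nor Theorem~\ref{GKdim-filtered}.
\end{itemize}
The paper's route needs affineness of $\gr_\F(A)$ at both of those steps. Your fallback to standard filtrations is a special case of this corrected statement, not a proof of the theorem as stated.
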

\begin{proof} Let $\F=\{ V_n \}_{n \geq 0}$ be a finite dimensional filtration for $A$.
By Remark \ref{entropy-bridges}, \linebreak    $\H(\operatorname{gr}_\F(A))>1$, and by definition, $\dim (\operatorname{gr}_\F A)_{(n)} =\dim (\oplus_{i=0}^n V_{i}/V_{i-1})=\dim V_{n+1}$. We have that  $\lim_{n \to \infty} (\dim V_ n)^{1/n}$ exists by Theorem \ref{convergence-theorem} and is equal to $\H(\operatorname{gr}_\F(A))>1$. Hence there is an $C>1$ such that for bigger enough $n$, $\dim V_n > C^n$. By Theorem \ref{GKdim-filtered} and since $A$ is affine, it follows that $A$ has exponential growth.
\end{proof}

In particular, we have the following:

\begin{cor}\label{no-intermediate}
Let $A$ be an affine algebra with a finite dimensional filtration. If $\halg(A) > 0$, then $A$ cannot have subexponential growth.    
\end{cor}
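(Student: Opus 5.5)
This follows directly from Theorem \ref{theo:h(A)>0_G(A)exponential} together with the definitions of the growth types. Let $\F=\{V_n\}_{n\geq 0}$ be a finite dimensional filtration of $A$ with $\halg(A,\F)>0$. By Theorem \ref{theo:h(A)>0_G(A)exponential}, $\G(A)=\mathcal{E}_1$; that is, $A$ has exponential growth in the sense of the definition of growth of an algebra. It remains to check that exponential growth rules out subexponential growth.

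By definition, subexponential growth requires the strict inequality $\G(A)<\mathcal{E}_1$. I read this strict inequality in the usual sense for the partial order $\leq$ on $\Phi/\sim$: it means $\G(A)\leq \mathcal{E}_1$ and $\G(A)\neq\mathcal{E}_1$. The second condition fails, since $\G(A)=\mathcal{E}_1$. Hence $A$ cannot have subexponential growth.

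There is one subtle point. The paper's remark after the growth notation interprets ``$>$'' via a strict inequality of functions, which is less standard. To be safe under that reading too, I would verify that $\G(A)=\mathcal{E}_1$ is incompatible with $\G(A)<\mathcal{E}_1$ in that sense as well. Under that reading, $\G(A)<\mathcal{E}_1$ means there are $c,m$ with $e^{n}> c\, d_V(mn)$ for all $n$.

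Equality $\G(A)=\mathcal{E}_1$ gives $e^{n}\leq c'\, d_V(m'n)$ for some $c',m'$. This bound only lands in the same growth class, so it yields no contradiction with the previous display. Under the paper's literal reading, ``$<$'' is therefore not antisymmetric with ``$=$''. The intended meaning of subexponential growth is clearly ``$\leq\mathcal{E}_1$ but not equal to $\mathcal{E}_1$'', and I would state this interpretation explicitly.

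A cleaner supporting argument is available. From the proof of Theorem \ref{theo:h(A)>0_G(A)exponential}, we have $\dim V_n> C^n$ for some $C>1$ and all large $n$. Hence $\liminf_n (\dim V_n)^{1/n}>1$, which by Theorem \ref{GKdim-filtered} is a growth-class invariant. Any algebra genuinely of intermediate growth has $\lim_n (d_V(n))^{1/n}=1$. Since this invariant differs, $\G(A)$ cannot be intermediate.

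The only real obstacle is this interpretational point about the strict order. The rest is immediate from Theorem \ref{theo:h(A)>0_G(A)exponential}.
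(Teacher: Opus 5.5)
Your proof is correct and matches the paper, which states this corollary as an immediate consequence of Theorem \ref{theo:h(A)>0_G(A)exponential}: $\G(A)=\mathcal{E}_1$ rules out $\G(A)<\mathcal{E}_1$ under the usual reading of the strict order, and that is the reading the paper intends. Your extra $\liminf$ argument is unnecessary, and its claim that intermediate growth forces $\lim_n d_V(n)^{1/n}=1$ would itself need justification (e.g.\ via Fekete's lemma for a frame containing $1$), so you can drop it.
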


\begin{rem}
 Theorem \ref{theo:h(A)>0_G(A)exponential} generalizes the Case 3 trichotomy  in 
\cite[Corollary~5.17]{Bock_Canto_Barquero_Gonzalez_Campos_Sebandal:2024_AlgebraicEntropyofPathAlgebrasandLeavittPathAlgebrasofFiniteGraphs} for (Leavitt) path algebras over finite graphs to arbitrary affine infinite-dimensional algebras.
\end{rem}

The following Proposition will generalize \cite[Proposition 3.4]{Bock_Canto_Barquero_Gonzalez_Campos_Sebandal:2024_AlgebraicEntropyofPathAlgebrasandLeavittPathAlgebrasofFiniteGraphs}.

\begin{prop}
\label{prop:Finitefiltration_h(A,F)=0_iff_h(A,G)=0}
   Let $\mathcal{F}=\{ V_n \}_{n\geq 0}$ and  $\mathcal{G}= \{ W_n \}_{n\geq 0}$ be two finite dimensional filtrations of an affine algebra $A$ such that $W_n \subset W_1^n$. Then $\halg(A, \mathcal{F})=0$ implies that $\halg(A, \mathcal{G})=0$.
\end{prop}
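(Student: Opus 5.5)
We argue by contradiction, using Theorem \ref{theo:h(A)>0_G(A)exponential} for the filtration $\mathcal{G}$ and Theorem \ref{GKdim-filtered} to compare growth through $\mathcal{F}$. If $A$ is finite dimensional, then $\halg(A,\mathcal{G})=0$ by definition, so we may assume $A$ is infinite dimensional. Suppose $\halg(A,\mathcal{G})>0$. By Theorem \ref{theo:h(A)>0_G(A)exponential}, or more precisely by its proof (which combines Theorem \ref{convergence-theorem} and Remark \ref{entropy-bridges}), there are a constant $C>1$ and $n_0$ with $\dim W_n > C^n$ for all $n\geq n_0$.

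Next we use that both filtrations compute the growth of $A$. By Theorem \ref{GKdim-filtered}, $\G(A)=\G(\dim W_n)=\G(\dim V_n)$. The hypothesis $W_n\subset W_1^n$ is what guarantees that $n\mapsto \dim W_n$ is dominated by the growth of a finite-dimensional generating subspace. Indeed, $\dim W_n\le \dim (\K+W_1)^n$, and $(\K+W_1)^n$ lies in some $V_{kn}$, since $\K+W_1$ is finite dimensional and hence contained in some $V_k$. Either way we obtain fixed constants $c,m\in\mathbb{N}$, \emph{independent of everything below}, with
\[
\dim W_n \le c\,\dim V_{mn}\quad\text{for all } n .
\]

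Now we exploit $\halg(A,\mathcal{F})=0$. Fix $\varepsilon>0$ with $e^{\varepsilon m}<C$. Then $\dim(V_j/V_{j-1})\le e^{\varepsilon j}$ for $j\ge j_0$. Summing the quotients gives
\[
\dim V_{N}\le D+N e^{\varepsilon N}
\]
for a constant $D$ and all $N$. Combining the three bounds yields, for large $n$,
\[
C^n<\dim W_n\le c\,\bigl(D+mn\,e^{\varepsilon m n}\bigr).
\]
Taking $n$-th roots and letting $n\to\infty$ gives $C\le e^{\varepsilon m}<C$, a contradiction. Hence $\halg(A,\mathcal{G})=0$.

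The main obstacle is getting the constant $m$ in the comparison $\dim W_n\le c\dim V_{mn}$ to be uniform, so that $\varepsilon$ can be chosen after $m$. This is exactly why I would derive the comparison from the containment $W_n\subset W_1^n$ (i.e.\ $\mathcal{G}$ is dominated by a standard filtration) rather than just from the abstract equality of growth classes. One also has to ensure that $\dim V_n\to\infty$ only through the quotients, which is immediate since $V_0=0$.
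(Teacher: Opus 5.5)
Your proof is correct and rests on the same key step as the paper's, namely $W_n\subset W_1^n\subset V_k^n\subset V_{kn}$ once $W_1\subset V_k$. Where the paper then invokes Proposition 3.4 of Bock et al.\ (zero entropy survives the rescaling $V_n\mapsto V_{pn}$ and passes to termwise smaller filtrations), you verify this directly through the bound $\dim V_N\le D+Ne^{\varepsilon N}$; note also that you only need $\dim W_n>C^n$ for infinitely many $n$, which follows straight from the $\limsup$ defining $\halg(A,\mathcal{G})$ (take $1<C<e^{\halg(A,\mathcal{G})}$ and use $\dim W_n\ge\dim(W_n/W_{n-1})$), so neither Theorem \ref{theo:h(A)>0_G(A)exponential} with Theorem \ref{convergence-theorem} nor the growth-class comparison via Theorem \ref{GKdim-filtered} is actually needed.
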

\begin{proof} Let $\mathcal{F}=\{ V_n \}_{n\geq 0}$ and  $\mathcal{G}= \{ W_n \}_{n\geq 0}$. 
    Since $A$ is affine, there exists  $p \in \mathbb{N}^+$ such that $W_1 \subset V_p$. Hence $W_n \subset V_{pn}$ for all $n \in \mathbb{N}$. Consider the filtration $\mathcal{H}=\{ Z_n \}_{n\geq 0}$ of $A$, where $Z_n=V_{pn}$. By \cite[Proposition 3.4(1)(2)]{Bock_Canto_Barquero_Gonzalez_Campos_Sebandal:2024_AlgebraicEntropyofPathAlgebrasandLeavittPathAlgebrasofFiniteGraphs}, $\halg(A, \mathcal{H})=0$, and since $W_n \subset Z_n$ for all $n$, $\halg(A, \mathcal{G})=0$, again by \cite[Proposition 3.4(1)]{Bock_Canto_Barquero_Gonzalez_Campos_Sebandal:2024_AlgebraicEntropyofPathAlgebrasandLeavittPathAlgebrasofFiniteGraphs}.
\end{proof}


\begin{defn}\label{definition-PI-algebra}
    Let $X=\{x_1,x_2,\ldots\}$ be a countable set, let $\Phi$ be an associative commutative unital ring, and let $\Phi\langle X\rangle$ denote the free associative unital $\Phi$-algebra generated by $X$. Let
$
f(x_1,\ldots,x_n)\in\Phi\langle X\rangle
$
be a noncommutative polynomial involving only the variables $x_1,\ldots,x_n$, and assume that at least one monomial of maximal degree has coefficient $1$.\footnote{This assumption excludes trivial identities such as the polynomial $px$, which vanishes identically in every ring of characteristic $p$.} A $\Phi$-algebra $A$ is called a \emph{PI-algebra} if there exists such a polynomial $f$ satisfying
$
f(a_1,\ldots,a_n)=0
$
for all $a_1,\ldots,a_n\in A$. In this case, $f$ is called a \emph{polynomial identity} of $A$. If $\Phi=\mathbb{Z}$, we simply say that $A$ is a \emph{PI-ring}.
\end{defn}

\begin{rem}\label{rem:dim(V_n/V_n-1)bounded--GK(A)leq1}
Let $A$ be an affine algebra with a finite-dimensional filtration $\mathcal{F}=\{V_n\}_{n\geq 0}$. By the proof of \cite[Lemma 3.7]{Bock_Canto_Barquero_Gonzalez_Campos_Sebandal:2024_AlgebraicEntropyofPathAlgebrasandLeavittPathAlgebrasofFiniteGraphs}, if the sequence $\{\dim (V_n/V_{n-1})\}_{n\geq 0}$ is bounded, then $\GK (A)\leq 1$.     
\end{rem}

\begin{prop}\label{prop:Pi-algebra} 
    Let $A$ be an affine algebra with a finite  dimensional filtration $\mathcal{F}=\{ V_n \}_{n\geq 0}$. If the sequence $\{ \dim(V_n/V_{n-1}) \}_{n\geq 0}$ is bounded, then $A$ is a PI-algebra. 
\end{prop}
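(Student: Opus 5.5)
The plan is to reduce the statement to a known structural result about algebras of Gelfand-Kirillov dimension at most one. First, by Remark \ref{rem:dim(V_n/V_{n-1})bounded--GK(A)leq1}, the boundedness of $\{\dim(V_n/V_{n-1})\}_{n\geq 0}$ gives $\GK(A)\leq 1$. Since $A$ is affine, Bergman's gap theorem tells us that $\GK(A)\in\{0\}\cup[1,\infty]$. This leaves exactly two cases, $\GK(A)=0$ and $\GK(A)=1$.

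\emph{Case $\GK(A)=0$.} For an affine algebra this means $A$ is finite dimensional. Indeed, if $d_V(n)$ is not eventually constant it increases by at least one at each step, which forces $\GK\geq 1$. So $A$ embeds into $\operatorname{End}_\K(A)\cong M_N(\K)$ via the left regular representation, with $N=\dim A$. By the Amitsur--Levitzki theorem, $A$ then satisfies the standard identity $s_{2N}$. This identity is multilinear with coefficients $\pm1$, so it meets the normalization in Definition \ref{definition-PI-algebra}.

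\emph{Case $\GK(A)=1$.} Here I would invoke the Small--Stafford--Warfield theorem (see \cite{Krause_Lenagan:Book2000_Growth:ofAlgebrasandGelfandKirillovDimension}): an affine algebra of Gelfand-Kirillov dimension one satisfies a polynomial identity. The theorem is usually stated first for semiprime algebras, where $A$ is a finite module over its center. It then passes to the general case because the prime radical $N$ is nilpotent and $A/N$ is PI. If $f$ is an identity of $A/N$ and $N^k=0$, then the product $f(x_1,\dots)f(x_{r+1},\dots)\cdots$ of $k$ copies of $f$ in disjoint variables is an identity of $A$. Its leading monomial still has coefficient $1$.

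The main obstacle is mostly bookkeeping: I need to make sure the cited structure theorem applies in exactly this generality, namely over an arbitrary field, with no primeness or semiprimeness hypothesis, for a finitely generated algebra. If one prefers to avoid the gap theorem, the $\GK<1$ case can be handled directly as above, since $d_V$ not eventually constant forces linear growth. The rest is routine.
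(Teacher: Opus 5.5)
Your proof is correct and follows the paper's argument: the Remark gives $\GK(A)\le 1$, so $\GK(A)\in\{0,1\}$; the finite-dimensional case is trivially PI; and the case $\GK(A)=1$ is Small--Stafford--Warfield. Two minor points: the paper uses the simpler standard identity $s_{k+1}$ for $\dim A=k$, so Amitsur--Levitzki is not needed; and ruling out $\GK(A)\in(0,1)$ is the elementary stabilization argument you give yourself, not Bergman's gap theorem, which concerns the interval $(1,2)$.
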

\begin{proof}
  By Remark~\ref{rem:dim(V_n/V_n-1)bounded--GK(A)leq1}, $\GK (A)=0$ or $1$. If $\GK( A) =0$, then $A$ is trivially PI (for instance, if $\dim A=k$, then $A$ satisfies the standard identity in $k+1$ indeterminates). If $\GK (A) =1$, it is PI by the Small-Stafford-Warfield Theorem \cite{Small_Stafford_Warfield:1985_AffinealgebrasofGelfand-KirillovdimensiononearePI} .
\end{proof}

\begin{defn} \label{def:directedgraph}
    A \emph{directed graph} is a tuple $
E=(E^0,E^1,s,r)$,
where $E^0$ and $E^1$ are disjoint sets and $s,r:E^1\to E^0$ are maps. The elements of $E^0$ and $E^1$ are called \emph{vertices} and \emph{edges}, respectively. For $e\in E^1$, $s(e)$ is called the \emph{source} of $e$ and we say $s(e)$ \emph{emits} $e$, and $r(e)$ is called the \emph{range} of $e$. A (finite) \emph{path}  is a sequence of edges $
p=e_1e_2\cdots e_n$
with $r(e_i)=s(e_{i+1})$ for each $1\leq i\leq n-1$. Here, $n$ is the \emph{length} of $p$ and is denoted by $l(p)$. The vertices are regarded as paths of length $0$. The source of $p$ is $s(p)=s(e_1)$ and its range is $r(p)=r(e_n)$. We set $s(v)=r(v)=v$ for every vertex $v$. The set of paths in $E$ is denoted by $\textnormal{Path}(E)$. If $p=e_1\cdots e_n$ has length $n>0$ with $s(e_1)=r(e_n)$ and $s(e_i)\neq s(e_j)$ for each $i\neq j$, then $p$ is called a \emph{cycle based at $s(p)$}. An edge $f$ for which $s(f)=s(e_i)$ for some $i=1,2,\dots, n$ and $f\neq e_j$ for all $j=1,2,\dots,n$ is called an \emph{exit} of the cycle $p$.
\end{defn}

\begin{defn} \cite{abramsaramolinabook}\label{def:LPA} 
    For a graph $E=(E^0, E^1, s,r)$ and a field $\K$, the \emph{Leavitt path algebras} of $E$, denoted by $L_\K(E)$, is the algebra generated by the sets $\{v:v\in E^0\}$, $\{ e :e \in E^1  \}$ and $\{ e^* : e \in E^1  \}$ with coefficients in $\K$, subject to the relations
\begin{enumerate}[nolistsep]
\item[\textnormal{(i)}]
$v_iv_j=\delta_{i,j}v_i$ for every $v_i, v_j\in E^0$;
\item[\textnormal{(ii)}] $s(e)e=e = e r(e)$ and $r(e)e^*=e^*=e^*s(e)$ for all $e \in E^1$;

\item[\textnormal{(CK1)}] $e^*e'=\delta_{e,e'}r(e)$ for all $e, e'\in E^1$;

\item[\textnormal{(CK2)}]$\sum_{ \{e \in E^1 :s(e)=v    \}  } e e^*=v$ for every $v\in E^0$ which is not a sink.

\end{enumerate}
\end{defn}
\noindent In \cite{Bock_Canto_Barquero_Gonzalez_Campos_Sebandal:2024_AlgebraicEntropyofPathAlgebrasandLeavittPathAlgebrasofFiniteGraphs}, Leavitt path algebras are endowed with its \emph{standard filtration}:

\begin{defn}[Standard Filtration for Leavitt path algebras]\label{def:LPAstandardfiltration}
    Let $E=(E^0,E^1, s,r)$ be a finite graph and $L_\K(E)$ its associated Leavitt path algebra. The \emph{standard filtration} of $L_\K(E)$ is $\mathcal{L}=\{W_n\}_{n\geq 0}$ where $W_n=\span \{pq^*~|~p,q\in \Path(E),~r(p)=r(q),~
    l(p)+l(q)\leq n\}$ for each $n\geq 0$. We again note that here, vertices are considered paths of length $0$. 
    
\end{defn}

\begin{rem} 
For a finite graph $E$, the standard filtration $\mathcal{L}=\{W_n\}_{n\geq 0}$ for $L_\K(E)$ is finite and $W_n=W_1^n$. Note that $W_0 = \span E^0\neq \K$ but $W_n=W_1^n$ for all $n\geq 0$. Note that this is not a standard filtration in the sense of Definition \ref{def:filtration}, but the meaning of the expression \emph{standard filtration} will always be clear by context.

\end{rem}

\begin{prop}\label{prop:noexit-bounded}    Let $E$ be a finite graph and $L_\K(E)$ its associated Leavitt path algebra with its standard filtration $\mathcal{L}=\{W_n\}_{n\geq 0}$. 
Then $E$ has no cycle with an exit if and only if $\{\dim(W_n/W_{n-1}) \}_{n\geq 0}$ is bounded. \end{prop}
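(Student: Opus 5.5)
Both directions can be read off from the Abrams--Aranda Pino--Siles Molina results on the growth of Leavitt path algebras of finite graphs, combined with the results of this paper. The first step is to recall the standard basis of $L_\K(E)$: the reduced monomials $pq^*$ with $r(p)=r(q)$. Fixing, for each non-sink vertex $v$, one special edge $e_v$ emitted by $v$, these are the monomials that contain no subword $e_ve_v^*$. Under this basis, $W_n$ is spanned by the reduced $pq^*$ with $l(p)+l(q)\leq n$. Hence $\dim(W_n/W_{n-1})$ equals the number of reduced monomials $pq^*$ with $l(p)+l(q)=n$ exactly. This exact count holds because the CK2 rewriting never increases $l(p)+l(q)$, so the reduced monomials of total length $\le n$ form a basis of $W_n$.

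\emph{($\Leftarrow$)} Suppose the counts are bounded. Proposition \ref{prop:Pi-algebra} (with Remark \ref{rem:dim(V_n/V_n-1)bounded--GK(A)leq1}) gives $\GK L_\K(E)\le 1$. Alternatively, I would argue directly: if $E$ had a cycle $c$ based at $v$ with an exit $f$, then I would choose the special edge at $v$ to be $f$ rather than the first edge of $c$. The elements $c^k(c^*)^j$ are then reduced, and there are $n/l(c)+1$ of them of total length $n$ (for $l(c)\mid n$). So the counts are unbounded, a contradiction. This direct count is the cleanest route, and it uses only the normal form.

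\emph{($\Rightarrow$)} Now suppose no cycle has an exit. Since $E$ is finite, the cycles are then pairwise disjoint. Moreover, any path leaving a cycle must do so along an exit, so a path that enters a cycle stays on it forever. Consider a reduced $pq^*$ with $l(p)+l(q)=n$ large. A path of length $>|E^0|$ must revisit a vertex, so it contains a cycle; once on that cycle it cannot leave. Hence $p$ decomposes as $p=\alpha c^k\beta$, where $\alpha$ has bounded length (at most $|E^0|$), $c$ is one of finitely many cycles, and $\beta$ is a proper initial segment of $c$ (or of a rotation of $c$). The same holds for $q$, and $r(p)=r(q)$ forces $p$ and $q$ to end on the same cycle $c$ in compatible positions. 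So a reduced monomial of length $n$ is determined by finitely many data ($\alpha$, $\alpha'$, $c$, $\beta$) together with $k,k'$ subject to $k+k'\approx n/l(c)$.

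The crux, and the main obstacle, is to use reducedness to pin down $k,k'$. At the vertices of $c$ the cycle edge is the only edge emitted (no exits), so it is the special edge there. Therefore $pq^*$ cannot contain $ee^*$ for a cycle edge $e$ at the junction. This forces one of $p,q$ to have no winding around $c$ beyond bounded length, i.e.\ $\min(k,k')$ is bounded. Indeed, if both ended with the cycle edge entering $r(p)$, then $pq^*$ would contain $e_ve_v^*$. With $\min(k,k')$ bounded, the number of choices for $(k,k')$ with $k+k'\approx n/l(c)$ is bounded. Summing over the finitely many data gives a uniform bound on $\dim(W_n/W_{n-1})$. I expect the care to lie in this junction argument: verifying that $r(p)=r(q)$ lying on $c$ with both $p$ and $q$ of large length forces both to end with the same cycle edge, hence a forbidden $e_ve_v^*$. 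Paths ending at sinks, or not reaching any cycle, have bounded length and so contribute only for finitely many $n$.
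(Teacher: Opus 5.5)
\paragraph{The gap is in your direct count for ($\Leftarrow$).} You put the non-special condition on the wrong edge. In $c^k(c^*)^j=c^k(c^j)^*$ with $k,j\ge 1$, the junction is $e_me_m^*$, where $e_m$ is the \emph{last} edge of $c=e_1\cdots e_m$. This edge is emitted by the vertex preceding $v$ on $c$. Choosing $f$ as the special edge at $v=s(e_1)$ only makes $e_1$ non-special, which does not affect these monomials unless $c$ is a loop.

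The step genuinely fails. Take vertices $v,u,w$ and edges $e_1\colon v\to u$, $e_2\colon u\to v$, $f\colon v\to w$, with $c=e_1e_2$. Since $u$ emits only $e_2$, (CK2) gives $e_2e_2^*=u$. Hence for $k,j\ge 1$
\[
c^k(c^j)^*=c^{k-1}e_1e_1^*(c^{j-1})^*\in W_{2(k+j)-2},
\]
which is zero in $W_n/W_{n-1}$ for $n=2(k+j)$. Of your $n/2+1$ elements, only $c^{n/2}$ and $(c^{n/2})^*$ survive.

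The repair is easy. Rotate the cycle to $c'=e_2\cdots e_me_1$, whose last edge $e_1$ is emitted by $v$ and is non-special once $f$ is special at $v$. Then $c'^k(c'^{N-k})^*$ for $0\le k\le N$ are $N+1$ distinct reduced monomials of total length $Nm$. Alternatively, use the paper's elements $c^r ff^*(c^s)^*$ with $e_1$ (not $f$) chosen special at $v$.

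Your first route ("bounded $\Rightarrow \GK\le 1$") also does not close on its own. You still need an external input, such as the Bell--Lenagan--Rangaswamy theorem that the paper cites ($L_\K(E)$ is PI iff no cycle has an exit) or the Alahmadi--Alsulami--Jain--Zelmanov growth computation. Incidentally, the normal form you use is the Alahmadi--Alsulami--Jain--Zelmanov basis, not a result of Abrams--Aranda Pino--Siles Molina.

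\paragraph{The ($\Rightarrow$) direction is correct, and is a cleaner version of the paper's argument.} The paper rewrites monomials by hand via (CK1)/(CK2) into the shape $xc_i^rp^*$ and bounds the number of triples $(x,i,p)$. You instead get the exact identity $\dim(W_n/W_{n-1})=\#\{\text{reduced } pq^* : l(p)+l(q)=n\}$. This holds because CK2-rewriting never increases total length and reduced monomials are linearly independent.

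You then observe that the unique cycle edge entering $r(p)$ is forced to be special. So in a reduced monomial one of $p,q$ contains no cycle edge, and this gives the uniform bound immediately. The same identity also supplies, in the other direction, the linear independence modulo $W_{n-1}$ that the paper asserts without justification. It is exactly what makes the corrected count above rigorous.
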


\begin{proof} $(\Leftarrow)$ 
Thus directly follows from \cite[Lemma 3.7]{Bock_Canto_Barquero_Gonzalez_Campos_Sebandal:2024_AlgebraicEntropyofPathAlgebrasandLeavittPathAlgebrasofFiniteGraphs} and \cite[Theorem 4.1]{Bell_Lenagan_Rangaswamy:2016_Leavittpathalgebrassatisfyingapolynomialidentity}. For completeness: Let $c$ be a cycle of length $m$ based at a vertex $v$ and let $e$ be an exit from $v$. For each $r\geq 0$, let $p_r=c^re$. Then $l(p_r)=rm+1$. Now, consider the elements $p_rp_s^*$. Since $c$ has an exit at $e$, we have at $|s^{-1}(v)|\geq 2$ and hence, $ee^*\neq s(v)$. It follows that $p_rp_s^*$,  is a nonzero and linearly independent element $W_m/W_{m-1}$ where $m=l(p_r)+l(p_s)$. 

    Fix $k\geq 0$ and set $n=km+2$. For each $r=0,1,\dots, k$, take $s-k=r$. Then 
    \[
    l(p_r)+l(p_s)=rm+1+sm+1=(r+s)m+2=km+2=n.
    \]
    Thus, the $k+1$ elements  $p_0p_k^*, p_2p_{k-1}^*, \dots, p_kp_0^*$ all are distinct, linearly independent elements in $W_n/W_{n-1}$. Thus, $\dim(W_n/W_{n-1})\geq k+1$. Hence, the sequence $\{\dim(W_n/W_{n-1})\}_{n\geq 1}$ is unbounded. 

\noindent $(\Rightarrow)$   
Suppose that no cycle in $E$ has an exit and let $c_1,c_2, \dots , c_l$ be the cycles in $E$.

Now consider a cycle $c_i=e_1e_2\dots e_k$ and an element $yc_i^sq$ for some paths $y,p\in \textnormal{Path}(\widehat{E})$. By (CK1), we may assume that $y\in\textnormal{Path}(E)$. If $q\in \textnormal{Path}(E)$, then we may rewrite $yc_i^sq=y'c_i^{s'}$. Suppose $q$ is a ghost path, that is, $q=g_1^*g_2^*\dots g_l^*$ for some $g_i\in E^1$ with $r(c_i)=s(q)=s(g_1^*)=r(g_1)$. If $g_1=e_k$, then  
\[yc_i^sq=yc_i^{r-1}e_1\dots e_{k-1}g_1g_1^*g_2^*\dots g_l^*=yc_i^{s-1}e_1\dots e_{k-1}s(g_1)g_2^*\dots g_l^*=yc_i^{s-1}e_1\dots e_{k-1}g_2^*\dots g_l^*.\]
Thus, $yc_i^sq=y''c_i'^{s-1}q'$ where $c_i'$ is a cyclic permutation of $c_i$ and $q'$ is a ghost path. We may also adopt the same arguments for elements $yc_i^sq$ where $s<0$ with the convention 
$c^s:=(c^*)^{-s}$.

Hence, for a large enough positive integer $m$, we may let $W_m/W_{m-1}$ be a space that contains only non-zero monomials of the form $xc_i^rp^*$ for some $r\in \mathbb{Z}\setminus \{0\}$ and $x,p\in \text{Path}(E)$ where $x,p\neq c$ (here, vertices are considered paths). Since $c_i$ has no exit, $p$ is not a subpath of $c_i$ by the (CK2) relation. That is, $r(p)$ is a vertex in $c_i$ and $s(p)$ is not a vertex in $c_i$.

In particular, $xc_i^rp^*\in W_m/W_{m-1}$ where $m=l(x)+l(c_i)|r|+l(p)$. Now fixing the triple $(x,i,p)$, and solving $l(x)+l(c_i)|r|+l(p)=m$, we obtain only one value of $r$ and taking both $\pm r$, we have $2$ basis elements in $W_m/W_{m-1}$. Let $M_1=\{(x,i,p)~|~x,p\in \textnormal{Path}(E), i\leq l, r(x)\textnormal{~and~}r(p) \textnormal{~are vertices in~}C_i\}$ and $N_1=|M_1|<\infty $. Then $\dim(W_{m'}/W_{m'-1})\leq 2N_1$ for all $m'\geq m$. Take $N_2=\max\{\dim(W_n/W_{n-1})~|~n<m\}<\infty$ (since $E$ is finite). It follows that the sequence $\{\dim(W_n/W_{n-1})\}_{n\geq 0}$ is bounded by $ \max\{2N_1, N_2\}$. \end{proof}

\begin{thm}\label{pi:new-criterion}
Let $E$ be a finite graph and $L_\K(E)$ its associated Leavitt path algebra equipped with the standard filtration $\mathcal{L}=\{W_n\}_{n\geq 0}$. Then $L_\K(E)$ is a PI-algebra if and only if sequence $\{ \dim(W_n/W_{n-1}) \}_{n\geq 0}$ is bounded.
\end{thm}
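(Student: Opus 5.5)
The plan is to prove the two implications separately. The ``if'' direction is immediate from what precedes the statement, and the ``only if'' direction uses the known characterization of PI Leavitt path algebras from \cite{Bell_Lenagan_Rangaswamy:2016_Leavittpathalgebrassatisfyingapolynomialidentity} together with Proposition \ref{prop:noexit-bounded}.

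\emph{Boundedness implies PI.} Since $E$ is finite, $L_\K(E)$ is affine: it is generated by the finitely many vertices, edges and ghost edges. The standard filtration $\mathcal{L}=\{W_n\}_{n\geq 0}$ is finite dimensional, because each $W_n$ is spanned by the finitely many $pq^*$ with $l(p)+l(q)\leq n$. So if $\{\dim(W_n/W_{n-1})\}_{n\geq 0}$ is bounded, Proposition \ref{prop:Pi-algebra} applies directly and $L_\K(E)$ is PI.

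One technical point needs checking. $W_0=\span E^0$ rather than $0$, so $\mathcal{L}$ does not literally satisfy condition (i) of Definition \ref{def:filtration}. To fix this, I will shift indices, setting $V_0=0$ and $V_{n}=W_{n-1}$. This changes neither the boundedness of the successive quotients nor the finite dimensionality. Moreover, Remark \ref{rem:dim(V_n/V_n-1)bounded--GK(A)leq1} only uses growth of $\dim V_n$, which is insensitive to such a shift.

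\emph{PI implies boundedness.} Suppose $L_\K(E)$ is PI. By \cite[Theorem 4.1]{Bell_Lenagan_Rangaswamy:2016_Leavittpathalgebrassatisfyingapolynomialidentity}, a Leavitt path algebra of a finite graph is PI if and only if no cycle of $E$ has an exit. By Proposition \ref{prop:noexit-bounded}, the absence of cycles with exits is equivalent to boundedness of $\{\dim(W_n/W_{n-1})\}$, which finishes the proof.

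As a sanity check on this direction, or as an alternative if one prefers a growth-theoretic route, I would argue by contraposition. If the sequence is unbounded, then by Proposition \ref{prop:noexit-bounded} some cycle $c$ at a vertex $v$ has an exit $e$. In that situation $L_\K(E)$ contains a copy of the Toeplitz-type algebra generated by $c$ and $c^*$ with $c^*c=v\neq cc^*$. This subalgebra contains infinitely many nonzero matrix units $c^i(v-cc^*)(c^*)^j$, so it contains arbitrarily large matrix algebras $M_n(\K)$ (in fact infinite matrices). That is incompatible with satisfying a fixed polynomial identity, since $M_n(\K)$ satisfies no identity of degree less than $2n$.

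The main obstacle is mostly bookkeeping: matching the filtration conventions with the hypotheses of Proposition \ref{prop:Pi-algebra}, and invoking the Bell--Lenagan--Rangaswamy criterion precisely for finite graphs. Given those earlier results, no new computation is needed.
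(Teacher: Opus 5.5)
Your proof is correct and follows the paper's argument. Boundedness gives PI via Proposition~\ref{prop:Pi-algebra}, and PI gives boundedness by combining \cite[Theorem 4.1]{Bell_Lenagan_Rangaswamy:2016_Leavittpathalgebrassatisfyingapolynomialidentity} with Proposition~\ref{prop:noexit-bounded}; your index shift to handle $W_0\neq 0$ and your matrix-unit argument $c^i(v-cc^*)(c^*)^j$ are valid extras, the latter being a self-contained substitute for the citation in the PI-implies-bounded direction.
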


\begin{proof} Follows from Proposition \ref{prop:Pi-algebra}, \cite[Theorem 4.1]{Bell_Lenagan_Rangaswamy:2016_Leavittpathalgebrassatisfyingapolynomialidentity}, and Proposition~\ref{prop:noexit-bounded}. 
\end{proof}

Recall that if $A$ and $B$ are two affine $\K$-algebras with  finite dimensional filtrations $\mathcal{F}=\{ V_n \}_{n\geq 0}$ and $\mathcal{G}=\{W_n \}_{n\geq 0}$, respectively, then $A \otimes B$ has a finite dimensional filtration $\mathcal{H}=\{Z_n\}_{n\geq 0}$, where $Z_n = \sum_{i+j = n} V_i \otimes W_j$. If two algebras $R$ and $S$ are $\mathbb{N}$-graded, $R=\bigoplus_{n=0}^\infty R_n$, $S=\bigoplus_{n=0}^\infty S_n$, $R \otimes S$ is also $\mathbb{N}$-graded, with the homogeneous component of degree $n$ being $\sum_{i+j=n} R_i \otimes S_j$. Clearly, then, if we take  $R=\operatorname{gr}_\mathcal{F}(A), S=\operatorname{gr}_\mathcal{G} (B)$, then $R \otimes S \simeq \operatorname{gr}_\mathcal{H} (A \otimes B)$ as $\mathbb{N}$-graded algebras with the operations above.

\begin{prop}
    Let $A$ and $B$ be 
two affine $\K$-algebras with finite dimensional filtrations. Then $\halg(A \otimes B) \leq \max\{ \halg(A), \halg(B)\}$ (compare with \cite[Lemma 3.10]{Krause_Lenagan:Book2000_Growth:ofAlgebrasandGelfandKirillovDimension}).
\end{prop}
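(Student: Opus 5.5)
The plan is to pass to associated graded algebras and compare radii of convergence of Hilbert series, as in Remark \ref{radius-of-convergence}. Let $\F=\{V_n\}_{n\geq 0}$ and $\mathcal{G}=\{W_n\}_{n\geq 0}$ be the given filtrations of $A$ and $B$, and let $\mathcal{H}=\{Z_n\}_{n\geq 0}$ be the tensor product filtration of $A\otimes B$ described just before the statement. Put $a_n=\dim(V_n/V_{n-1})$, $b_n=\dim(W_n/W_{n-1})$ and $c_n=\dim(Z_n/Z_{n-1})$. The isomorphism $\gr_\F(A)\otimes\gr_{\mathcal{G}}(B)\simeq\gr_{\mathcal{H}}(A\otimes B)$ of $\mathbb{N}$-graded algebras, noted above, gives the convolution identity
\[
c_n=\sum_{i+j=n}a_ib_j .
\]
Equivalently, the Hilbert series satisfy $f_{A\otimes B}(z)=f_A(z)f_B(z)$, where $f_A(z)=\sum a_nz^n$ and similarly for $B$ and $A\otimes B$.

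\emph{Step 1: normalize the definition of $\halg$.} The case distinction in the definition of $\halg$ is awkward, so first I will show that, for any filtered algebra $C$ with quotient-finite dimensional filtration,
\[
e^{\halg(C)}=\max\{1,\H(\gr C)\}.
\]
If $C$ is finite dimensional, both sides equal $1$, since $\H(\gr C)=0$ by Remark \ref{H=0}. If $C$ is infinite dimensional, infinitely many of the graded dimensions $d_n$ are nonzero integers. Hence $d_n^{1/n}\geq 1$ infinitely often, so $\H(\gr C)\geq 1$. Then $\limsup_n \frac{\log d_n}{n}=\log\limsup_n d_n^{1/n}$, by continuity and monotonicity of $\log$, where the terms with $d_n=0$ are irrelevant to the $\limsup$. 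This proves the identity in both cases.

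\emph{Step 2: compare entropies.} By Remark \ref{radius-of-convergence}, $\H(\gr C)=1/R_C$, where $R_C$ is the radius of convergence of the Hilbert series $f_C$. On the disc $|z|<\min\{R_A,R_B\}$ both $f_A$ and $f_B$ converge absolutely, so their Cauchy product $f_Af_B$ converges there as well. Hence $R_{A\otimes B}\geq\min\{R_A,R_B\}$, that is,
\[
\H(\gr(A\otimes B))\leq\max\{\H(\gr A),\H(\gr B)\}.
\]
Taking $\max\{1,\cdot\}$ of both sides and applying Step 1 gives
\[
e^{\halg(A\otimes B)}\leq\max\{e^{\halg(A)},e^{\halg(B)}\}.
\]
Taking logarithms yields the claim.

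The main obstacle is the bookkeeping around finite dimensionality. When $A$, say, is finite dimensional, $\H(\gr A)=0$ while $\halg(A)=0$ corresponds to the value $1$, and the convention $\log 0=-\infty$ must not create spurious terms. Step 1 is designed to absorb exactly this issue. As a sanity check for the finite-dimensional case, a direct estimate also works: if $a_i=0$ for $i>N$, then
\[
c_n\leq(\dim A)\max_{n-N\leq j\leq n}b_j ,
\]
which immediately gives $\halg(A\otimes B)\leq\halg(B)$. The affine hypothesis is only needed so that the filtrations are quotient-finite dimensional and all quantities are defined.
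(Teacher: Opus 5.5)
Your proof is correct and follows the same route as the paper: you pass to $\gr_\F(A)\otimes\gr_{\mathcal{G}}(B)\simeq\gr_{\mathcal{H}}(A\otimes B)$ and bound the radius of convergence of the Cauchy product of the Hilbert series below by $\min\{R_A,R_B\}$. Your Step 1, the identity $e^{\halg(C)}=\max\{1,\H(\gr C)\}$, also spells out the case $\halg=0$, which the paper passes over by citing Remark~\ref{entropy-bridges}; that remark is only stated for $\halg>0$.
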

\begin{proof} 
 Let $A$ and $B$ be 
two affine $\K$-algebras with finite dimensional filtrations $\mathcal{F}$ and $\mathcal{G}$, respectively. Let $R=\gr_\mathcal{F}(A)=\bigoplus_{n\geq 0 }R_n$ and $S=\gr_\mathcal{G}(B)=\bigoplus_{n\geq 0 }S_n$ and
let $f(z)=\sum_{n=0}^\infty( \dim R_n)z^n$ and $g(z)=\sum_{n=0}^\infty (\dim S_n)z^n$. Then $f(z)$ and $g(z)$ has radius of convergence $1/\H(R)$ and $1/\H(S)$, respectively. Then, by Merten's Theorem in complex analysis, $\H(R \otimes S)$, the inverse radius of convergence of the Cauchy product of $f(z)g(z)$, is greater than or equal to $\min \{ 1/\H(R), 1/\H(S) \}$. So $\H( R \otimes S) \leq \max \{ \H(R), \H(S) \}$. Taking $\log$, in view of the discussion preceeding this Proposition combined with Remark \ref{entropy-bridges}, we have our result.
\end{proof}

\begin{defn}
    Two quotient-finite dimensional filtrations $\mathcal{F}=\{ V_n \}_{n\geq 0}$ and $\mathcal{G}=\{ W_n \}_{n\geq 0}$ of an affine algebra $A$ are said to be \emph{strongly equivalent}, denoted by $\mathcal{F} \sim \mathcal{G}$, if there exist $k, \ell \in \mathbb{N}$ such that for every $n$, $\dim (V_n/V_{n-1}) \leq \dim (W_{n+k}/{W_{n+k-1}})$ and $\dim (W_n/W_{n-1}) \leq \dim (V_{n+\ell}/{V_{n+\ell-1}})$.
\end{defn}

\begin{prop}
    Let $\mathcal{F}=\{ V_n \}_{n\geq 0}$ and $\mathcal{G}=\{ W_n \}_{n\geq 0}$ be two equivalent quotient-finite dimensional filtrations of an affine algebra $A$.
    Let $R=\operatorname{gr}_\mathcal{F} (A)$, $S=\operatorname{gr}_\mathcal{G} (A)$, and suppose that both are strongly graded. Then $\halg(A, \mathcal{F})=\halg(A, \mathcal{G})$.
\end{prop}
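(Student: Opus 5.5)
The plan is to reduce everything to the cumulative dimensions $\dim V_n$ and $\dim W_n$, where an equivalence of filtrations translates directly into inequalities. I read ``equivalent'' as the inclusion-type condition: there are $k,\ell\in\mathbb{N}$ with $V_n\subseteq W_{n+k}$ and $W_n\subseteq V_{n+\ell}$ for all $n$. This gives $\dim V_n\le \dim W_{n+k}$ and $\dim W_n\le\dim V_{n+\ell}$. This is the point where the argument departs from strong equivalence: the inclusions control the \emph{cumulative} dimensions, not the quotients $\dim(V_n/V_{n-1})$ directly.

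First I would dispose of the finite-dimensional case, where both entropies are $0$ by definition. So assume $A$ is infinite dimensional. Write $R_n=V_n/V_{n-1}$ and $S_n=W_n/W_{n-1}$, so that $\dim V_n=\sum_{i\le n}\dim R_i$.

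The hypothesis that $R$ is strongly graded is used as follows. Strong gradedness gives $R_{n+m}=R_nR_m$, hence $R_n=R_1^n$. Since $A$ is infinite dimensional, $R$ is infinite dimensional, so no $R_n$ can vanish: if $R_N=0$, then $R_{n}=R_{N}R_{n-N}=0$ for all $n\ge N$. Thus $\log\dim R_n$ is defined for every $n$. The same argument applies to $S$.

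The key step is the identity
\[
\halg(A,\F)=\limsup_{n\to\infty}\frac{\log \dim V_n}{n},
\]
and likewise for $\G$. The inequality ``$\le$'' is immediate from $\dim R_n\le\dim V_n$.

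For ``$\ge$'', set $h=\halg(A,\F)$ and fix $\varepsilon>0$. For $n\gg0$ we have $\dim R_n\le e^{(h+\varepsilon)n}$. Since all $\dim R_i\ge1$ are finite, there is a constant $C$ with $\dim R_i\le Ce^{(h+\varepsilon)i}$ for all $i$. Summing gives $\dim V_n\le C(n+1)e^{(h+\varepsilon)n}$, so
\[
\limsup_{n\to\infty} \frac{\log\dim V_n}{n}\le h+\varepsilon.
\]
Letting $\varepsilon\to0$ gives the claim. This is just the radius-of-convergence remark (Remark~\ref{radius-of-convergence}) applied to $f(z)$ versus $f(z)/(1-z)$, with the caveat that the logarithmic scale makes the case $h=0$ harmless. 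I will check carefully that the case $h=0$ with $\dim R_n$ unbounded (but subexponential) still works. It does, since the bound above never used $h>0$.

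Finally, from $\dim V_n\le \dim W_{n+k}$ I get
\[
\frac{\log\dim V_n}{n}\le \frac{n+k}{n}\cdot\frac{\log\dim W_{n+k}}{n+k}.
\]
Since $(n+k)/n\to1$, this yields $\halg(A,\F)\le\halg(A,\G)$. The symmetric inclusion gives the reverse inequality.

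The main obstacle is the interpretation of ``equivalent''. If it instead means the multiplicative version $V_n\subseteq W_{kn}$, the inequality above only gives $\halg(A,\F)\le k\,\halg(A,\G)$. In that case I would try to use strong gradedness more seriously. Since $R_n=R_1^n$, the sequence $\dim R_n$ is submultiplicative, so $\lim(\dim R_n)^{1/n}$ exists by Fekete's lemma, in the spirit of Theorem~\ref{convergence-theorem}. I would then attempt to compare the degree-one generating spaces of $R$ and $S$ directly. I expect this variant to be the genuinely hard point.
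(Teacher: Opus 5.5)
Your argument is correct, but it takes a different route from the paper's. Note first that the hypothesis the paper actually uses is the \emph{strong} equivalence defined just before the proposition: $\dim(V_n/V_{n-1})\le\dim(W_{n+k}/W_{n+k-1})$ and symmetrically, rather than inclusions $V_n\subseteq W_{n+k}$. Your proof still covers this case. Summing over $i\le n$ gives $\dim V_n\le\dim W_{n+k}$, and that is the only consequence of equivalence you use.

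The paper stays at the level of the quotients throughout. It bounds $\H(R)=\limsup_n\sqrt[n]{\dim R_n}$ by $\limsup_n\sqrt[n]{\dim S_{n+k}}$. It then identifies the latter with $\H(S)$ via the radius of convergence of the $k$-th derivative of $\sum_n(\dim S_n)z^n$. Finally it converts to $\halg$ by taking logarithms (Remark~\ref{entropy-bridges}).

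Under strong equivalence, your cumulative lemma $\halg(A,\F)=\limsup_n\frac{\log\dim V_n}{n}$ is therefore unnecessary. Your shift estimate, applied directly to $\log\dim R_n\le\log\dim S_{n+k}$, already suffices, and it is simpler than the paper's derivative argument. Your route has the advantage that it also handles inclusion-type equivalence, which gives no control of the individual quotients. The lemma itself is also a natural entropy analogue of Theorem~\ref{GKdim-filtered}.

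Neither proof really needs strong gradedness. The paper never invokes it. In yours, the nonvanishing of every $R_n$ can be dropped: $A$ infinite dimensional forces $R_n\ne0$ infinitely often, hence $h\ge0$, and the summation bound goes through with $\log0=-\infty$.

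Finally, the multiplicative variant you flag as the hard point is in fact false, so no use of strong gradedness can rescue it. Take $\K\langle x_1,\dots,x_m\rangle$ with $m\ge2$, let $V_n$ be the span of words of length at most $n$, and set $W_n=V_{2n}$. Both associated graded algebras are strongly graded (each is generated by its degree-one component), and $V_n\subseteq W_n\subseteq V_{2n}$. Yet $\dim(W_n/W_{n-1})=m^{2n}+m^{2n-1}$, so $\halg(A,\F)=\log m$ while $\halg(A,\G)=2\log m$.
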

 \begin{proof}
     Let $R=\operatorname{gr}_\mathcal{F} (A)$, $S=\operatorname{gr}_\mathcal{G} (A)$. Remember that if $f(z)$ is the complex function given by $f(z)=\sum_{n=0}^\infty \dim R_n z^n$, $\H(R)$ is equal the inverse of the radius of convergence of $f$. Clearly, the same relation holds for $S$ and let $g(z)=\sum_{n=0}^\infty \dim S_n z^n$. Then
 \[\H(R)=\limsup_{n \to \infty} \sqrt[n]{\dim R_n} \leq \limsup_{n \to \infty} \sqrt[n]{\dim S_{n+k
}} \leq \limsup_{n \to \infty} \sqrt[n]{ n(n-1)\cdots(n-k)(\dim S_{n+k})}.\] The number \[
\limsup_{n \to \infty} \sqrt[n]{ n(n-1)\cdots(n-k)(\dim S_{n+k})}\] is the inverse of the radius of convergence of $g^{(k)}(z)$, the $k$-th derivative of $g(z)$, which is the same as that of $g(z)$. Hence 
\[\limsup_{n \to \infty} \sqrt[n]{ n(n-1)\cdots(n-k)(\dim S_{n+k})}=\limsup_{n \to \infty} \sqrt[n]{\dim S_n},\] and so $\H(R) \leq \H(S)$. By symmetry, $\H(S) \leq \H(R)$, and so they are equal. Hence, by Remark \ref{entropy-bridges}, $\halg(A,\mathcal{F})=\halg(A,\mathcal{G})$.
 \end{proof}

\section{Definition of Entropy for Modules}

\begin{defn}
    Let $A$ be a graded algebra. A \emph{grading} on an $A$-module $M$ is a collection of subspaces $\{M_i\}_{i\geq 0}$ such that $M=\bigoplus_{i=0}^\infty M_i$ and $V_i M_j \subset M_{i+j}$ for all $i,j\geq 0$. A \emph{graded $A$-module} is an $A$-module $M$ together with a grading on $M$. If each $M_i$ is finite dimensional, we say that the grading is \emph{finite dimensional} and say $M$ is a \emph{finitely graded module}.  
   
\end{defn}
\emph{From now on}, all of our graded modules will be finitely graded.

\begin{defn}
   
    Let $A$ be a filtered algebra with a filtration $\mathcal{F}=\{V_i\}_{i\geq 0}$. An $A$-module $M$ is said to be \emph{filtered} if it is endowed with a collection of subspaces $\Omega=\{ M_i \}_{i\geq 0}$ such that 
    \begin{enumerate}
    \item [(i)] $0=M_0\subset M_1\subset \dots \subset M_n\subset M_{n+1}\subset \dots\subset  M$,
    \item [(ii)] $\displaystyle\bigcup_{i\geq 0} M_i= M$, 
    \item [(iii)] $V_iM_j\subseteq M_{i+j}.$
\end{enumerate}
    
   A module $M$ with a filtration $\Omega$ is denoted by $(M,\Omega)$.    If each $M_i$ is finite-dimensional, the filtration $\Omega$ is called a \emph{finite-dimensional filtration}. We denote by $\gr_\Omega( M)$ the \emph{associated graded module to  $M$}, which is a module over $\gr_\mathcal{F}(A)$. As a vector space,
    \[ \gr_\Omega(M) = \bigoplus_{i=0}^\infty \big(M_i/M_{i-1}\big), \]
    with the convention that $M_{-1}=0$. The action of $\gr_\mathcal{F}(A)$ on $\gr_\Omega(M)$ is given by \[(a+ V_{i-1})(m+M_{j-1}):=(am +M_{i+j-1}).\]
\end{defn}

\begin{rem} Clearly,
    a filtered algebra $A$ is a filtered module over itself where the action is defined by the algebra multiplication. Moreover, the associated graded algebra to $A$ is precisely the associated graded module to $A$ as a filtered module.   
\end{rem}

\begin{defn}[Entropy of a Graded Module] Let  $A=\bigoplus_{n=0}^\infty A_n$ be a graded algebra. The \emph{entropy} of a graded $A$-module  $M=\bigoplus_{n=0}^\infty M_n$ is defined as
\[
\H(M)=\limsup_{n\rightarrow \infty} \sqrt[n]{\dim (M_n)}.
\]
\end{defn}

Clearly $M$ is finite dimensional if and only if $\H(M)$=0. Our first result shows that there is a gap between the values that $\H(M)$ can assume if $M$ is infinite dimensional.

\begin{prop}\label{entropy-gap}
    Let $M$ be an infinite-dimensional graded module $M$. Then $\H(M) \geq 1$.
\end{prop}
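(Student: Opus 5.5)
The argument is elementary and rests on the fact that the components are finite dimensional. Write $M=\bigoplus_{n\ge 0} M_n$, with every $M_n$ finite dimensional, which holds because all graded modules are assumed finitely graded. Since $M$ is infinite dimensional and each $M_n$ is finite dimensional, infinitely many of the $M_n$ must be nonzero. Otherwise $M$ would be a finite direct sum of finite-dimensional spaces, and hence finite dimensional.

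So there is an infinite set $S\subseteq\mathbb{N}$ with $\dim M_n\ge 1$ for every $n\in S$. For these $n$ we have $\sqrt[n]{\dim M_n}\ge 1$. This inequality holds along a subsequence tending to infinity, so
\[
\H(M)=\limsup_{n\to\infty}\sqrt[n]{\dim M_n}\;\ge\;\limsup_{n\in S,\,n\to\infty}\sqrt[n]{\dim M_n}\;\ge\;1 .
\]
An equivalent formulation uses Remark \ref{radius-of-convergence}, adapted to modules. The series $\sum_n (\dim M_n) z^n$ has infinitely many nonzero integer coefficients, each of absolute value at least $1$. Its terms therefore do not tend to $0$ at $z=1$, so the radius of convergence is at most $1$, and $\H(M)=1/R\ge 1$.

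The one point that needs care is to use the hypothesis that the grading is finite dimensional, which is built into the standing convention. Without it, $M$ could be concentrated in finitely many degrees and still be infinite dimensional. Note also that the $A$-module structure plays no role here. I expect no real obstacle in this proof.
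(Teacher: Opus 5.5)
Your proof is correct and is the same as the paper's: infinitely many $M_n$ are nonzero, so $\sqrt[n]{\dim M_n}\ge 1$ infinitely often, which gives $\H(M)\ge 1$. You also make explicit the use of the standing finite-dimensional grading convention, which the paper leaves implicit, and you add an equivalent radius-of-convergence argument; both are fine.
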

\begin{proof}
    If $M$ is infinite dimensional, for infinitely many $n$, $\dim M_n \geq 1$, and so for infinitely many $n$, $\sqrt[n]{(\dim M_n)} \geq 1$. Hence $\H(M) = \limsup_{n \to \infty} \sqrt[n]{(\dim M_n)} \geq 1$.
\end{proof}

We will make use of the following handy lemma:

\begin{lem}\label{lem:limsup_n-throot(an+bn)_leq_max(a,b)}
    Let $\{ a_n \}_{n\geq 0}$ and $\{ b_ n \}_{n\geq 0}$ be two sequences of positive real numbers. If $\limsup_{n \to \infty} \sqrt[n]{a_n}=a$ and $\limsup_{n \to \infty} \sqrt[n]{b_n}=b$, then
    \[ \limsup_{n \to \infty} \sqrt[n]{a_n+b_n}= \max\{a,b\}. \]
\end{lem}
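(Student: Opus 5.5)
We prove the two inequalities $\geq$ and $\leq$ separately, using only elementary properties of $\limsup$ and $n$-th roots.

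\emph{Lower bound.} Since all terms are positive, $a_n+b_n \geq a_n$ and $a_n+b_n\geq b_n$. The function $x\mapsto \sqrt[n]{x}$ is increasing on $\mathbb{R}^{>0}$, so $\sqrt[n]{a_n+b_n}\geq \sqrt[n]{a_n}$ and $\sqrt[n]{a_n+b_n}\geq\sqrt[n]{b_n}$ for every $n$. Taking $\limsup$ gives
\[
\limsup_{n\to\infty}\sqrt[n]{a_n+b_n}\geq \max\{a,b\}.
\]
This also covers the case $\max\{a,b\}=\infty$.

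\emph{Upper bound.} We may assume $c:=\max\{a,b\}<\infty$. The key estimate is
\[
a_n+b_n\leq 2\max\{a_n,b_n\},
\qquad\text{hence}\qquad
\sqrt[n]{a_n+b_n}\leq 2^{1/n}\max\{\sqrt[n]{a_n},\sqrt[n]{b_n}\}.
\]
Fix $\varepsilon>0$. By the definition of $\limsup$ there is $n_0$ such that $\sqrt[n]{a_n}<a+\varepsilon$ and $\sqrt[n]{b_n}<b+\varepsilon$ for all $n\geq n_0$. Then $\sqrt[n]{a_n+b_n}\leq 2^{1/n}(c+\varepsilon)$ for $n \geq n_0$. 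Since $2^{1/n}\to 1$, we get
\[
\limsup_{n\to\infty}\sqrt[n]{a_n+b_n}\leq c+\varepsilon.
\]
Letting $\varepsilon\to 0$ yields the reverse inequality.

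The only point needing care is the factor $2^{1/n}$. The $\limsup$ of a product is not in general the product of the $\limsup$s, but here one factor actually converges to $1$, so the $\limsup$ of the product equals $1\cdot \limsup_{n\to\infty}\max\{\sqrt[n]{a_n},\sqrt[n]{b_n}\}$. That $\limsup$ equals $\max\{a,b\}$, because the $\limsup$ of a pointwise maximum of two sequences is the maximum of their $\limsup$s. This is exactly what the $\varepsilon$-argument above checks. No earlier results from the paper are needed; the lemma is purely real-analytic.
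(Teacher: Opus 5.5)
Your proposal is correct and follows essentially the same route as the paper. The paper also gets the lower bound from $a_n, b_n \leq a_n+b_n$ and the upper bound from $a_n+b_n \leq 2\max\{a_n,b_n\}$ with the factor $\sqrt[n]{2}\to 1$; your $\varepsilon$-argument just spells out the final $\limsup$ step, which the paper leaves implicit.
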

\begin{proof}
Since $a_n \leq a_n + b_n $ and $b_n \leq a_n + b_n$, we have $\limsup_{n \to \infty} \sqrt[n]{a_n+b_n} \geq \max\{a,b\}$. On the other hand, $a_n+b_n \leq 2 \max \{ a_n , b_n \}$, so $\sqrt[n]{a_n+b_n} \leq \sqrt[n]{2} \max \{ \sqrt[n]{a_n}, \sqrt[n]{b_n})$. Taking the limit superior on both sides of this inequality give us $\limsup_{n \to \infty} \sqrt[n]{a_n+b_n} \leq \max\{a, b\}.$
\end{proof}

The following lemma is clear. 
\begin{lem}\label{lem:H_epi_mono}
Let $M$ and $N$ be graded $A$-modules. If there is a graded epimorphism of modules $M\rightarrow N$, then $\H(M)\geq \H(N)$. If there is a graded monomorphism $M\rightarrow N$, then $\H(M)\leq \H(N)$.   
\end{lem}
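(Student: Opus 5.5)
The plan is to compare the two graded modules degree by degree and then pass to the limsup of $n$-th roots, using monotonicity of $x\mapsto \sqrt[n]{x}$ and of the limit superior. We read "graded epimorphism" and "graded monomorphism" as $A$-module maps $\varphi\colon M\to N$ of degree zero, so that $\varphi(M_n)\subseteq N_n$ for every $n$. Only this degree-preserving property is needed; the module structure plays no role.

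\emph{Epimorphism case.} Suppose $\varphi$ is surjective. Any $y\in N_n$ has a preimage $x=\sum_i x_i$ with $x_i\in M_i$. Then $\varphi(x_i)\in N_i$, and comparing homogeneous components gives $y=\varphi(x_n)$. Hence $\varphi|_{M_n}\colon M_n\to N_n$ is surjective, so by rank--nullity $\dim N_n\leq \dim M_n$ for all $n$. Therefore
\[
\sqrt[n]{\dim N_n}\leq \sqrt[n]{\dim M_n}\quad\text{for all } n,
\]
and taking the limit superior gives $\H(N)\leq \H(M)$.

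\emph{Monomorphism case.} Suppose $\varphi$ is injective. Then each restriction $\varphi|_{M_n}\colon M_n\to N_n$ is injective, so $\dim M_n\leq \dim N_n$ for all $n$. The same monotonicity argument gives $\H(M)\leq \H(N)$.

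The only step needing care is that surjectivity passes to each homogeneous component. This holds because $\varphi$ preserves the direct sum decomposition, so components of a preimage map to the corresponding components of the image. Everything else is immediate. The finiteness of each $M_n$ and $N_n$, guaranteed by the standing convention that graded modules are finitely graded, ensures all the dimensions involved are finite.
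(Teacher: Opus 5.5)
Your proof is correct and is the degree-by-degree comparison the paper has in mind; the paper calls this lemma clear and writes out no proof. A degree-zero surjection or injection restricts to each homogeneous component, giving $\dim N_n\le\dim M_n$ or $\dim M_n\le\dim N_n$, and monotonicity of $\limsup\sqrt[n]{\cdot}$ finishes it. Your check that surjectivity passes to each component $M_n\to N_n$ is the one step worth writing out, and you handled it properly.
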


Now we are going to see three Propositions in which the algebraic entropy behaves like the Gelfand-Kirillov dimension (cf. \cite[Proposition 5.1]{Krause_Lenagan:Book2000_Growth:ofAlgebrasandGelfandKirillovDimension}). 

\begin{prop}\label{prop:H(oplus Mi)=max H(M_i)}
    Let $M_1, M_2,\ldots ,M_n$ be finitely $\mathbb{N}$-graded $A$-modules for a finitely $\mathbb{N}$-graded algebra $A$. Then
    for $\mathcal{M}=\bigoplus_{i=1}^n M_i$ or $\sum_{i=1}^n M_i$, $\H(\mathcal{M})= \max \{ \H(M_i) \}_{i=1}^n$.
\end{prop}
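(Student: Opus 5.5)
For the direct sum $\mathcal{M}=\bigoplus_{i=1}^n M_i$, I will use the grading $\mathcal{M}_k=\bigoplus_{i=1}^n (M_i)_k$, so that $\dim \mathcal{M}_k=\sum_{i=1}^n \dim (M_i)_k$. I will then apply Lemma \ref{lem:limsup_n-throot(an+bn)_leq_max(a,b)} and induct on $n$. The case $n=1$ is trivial. For the inductive step, write $\mathcal{M}=\big(\bigoplus_{i=1}^{n-1}M_i\big)\oplus M_n$, apply the lemma to the sequences $a_k=\sum_{i<n}\dim(M_i)_k$ and $b_k=\dim(M_n)_k$, and conclude by the induction hypothesis.

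One technical point is that the lemma is stated for \emph{positive} sequences, while $\dim (M_i)_k$ may vanish. I will observe that the lemma's proof works verbatim for nonnegative sequences. The lower bound comes from $a_k\le a_k+b_k$. The upper bound comes from $a_k+b_k\le 2\max\{a_k,b_k\}$ together with $\sqrt[k]{2}\to 1$; this step uses only the fact that $\limsup\max\{\sqrt[k]{a_k},\sqrt[k]{b_k}\}=\max\{a,b\}$, which holds for nonnegative sequences. Alternatively, I can dispose of finite-dimensional summands directly: if $M_i$ is finite dimensional, its components vanish for $k\gg0$, so it changes neither $\limsup$ nor the maximum (using $\H(M_i)=0$). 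Either route gives $\H(\bigoplus M_i)=\max_i\H(M_i)$.

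For the sum $\sum_{i=1}^n M_i$, I interpret the $M_i$ as graded submodules of a common graded module, so that the sum is graded by $\big(\sum M_i\big)_k=\sum_i (M_i)_k$. I will squeeze $\H$ from both sides using Lemma \ref{lem:H_epi_mono}.
\begin{itemize}
\item[(a)] The natural map $\bigoplus_i M_i\to\sum_i M_i$, $(m_i)\mapsto\sum m_i$, is a graded epimorphism. It therefore gives $\H(\sum M_i)\le \H(\bigoplus M_i)=\max_i \H(M_i)$ by the first part.
\item[(b)] Each inclusion $M_j\hookrightarrow \sum_i M_i$ is a graded monomorphism. It therefore gives $\H(M_j)\le\H(\sum M_i)$ for every $j$, hence $\max_j\H(M_j)\le \H(\sum M_i)$.
\end{itemize}
Combining (a) and (b) yields equality.

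The only real obstacle is the bookkeeping already noted: the positivity hypothesis in Lemma \ref{lem:limsup_n-throot(an+bn)_leq_max(a,b)} and making the grading on the sum precise (homogeneous components of a sum of graded submodules). Everything else is formal.
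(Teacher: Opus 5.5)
Your proof is correct and follows the paper's argument: you handle the direct sum via Lemma \ref{lem:limsup_n-throot(an+bn)_leq_max(a,b)}, and the sum by squeezing between the graded epimorphism $\bigoplus_i M_i\to\sum_i M_i$ and the inclusions $M_j\hookrightarrow\sum_i M_i$ using Lemma \ref{lem:H_epi_mono}. Your extra bookkeeping fills in details the paper leaves implicit, namely the induction on $n$ and checking that the lemma holds for nonnegative sequences, which matters because $\dim(M_i)_k$ may vanish.
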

\begin{proof} The homogeneous component of degree $n$ of the module  $\mathcal{M}=\bigoplus_{i=1}^n M_i$ is the direct sum of the homogeneous components of degree $n$ of $M_i$; similarly for $\mathcal{M}=\sum_{i=1}^n M_i$. For $\mathcal{M}=\bigoplus_{i=1}^n M_i$, the conclusion is  a direct consequence of Lemma \ref{lem:limsup_n-throot(an+bn)_leq_max(a,b)}.
 We have a graded epimorphism $\bigoplus_{i=1}^n M_i \rightarrow \sum_{i=1}^n M_i$, so $\max \{ \H(M_i) \}_{i=1}^n \geq \H(\sum_{i=1}^n M_i)$ by Lemma \ref{lem:H_epi_mono}. The reverse inequality is obvious. So we have our result.
\end{proof}

\begin{prop}\label{like-GK-2}
    Let $A$ be a graded algebra and let $M$ be a finitely generated graded $A$-module. Suppose that $\dim A_{n+1} \geq \dim A_n$ for every $n$. Then $\H(M) \leq  \H(A)$.
\end{prop}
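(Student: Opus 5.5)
Let $m_1,\dots,m_s$ be homogeneous generators of $M$; such a set exists because any finite generating set can be replaced by the homogeneous components of its elements, which are finitely many. Write $d_j=\deg m_j$ and $d=\max_j d_j$. The plan is to bound $\dim M_n$ by a finite sum of shifted dimensions of $A$, and then take $n$-th roots.

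\textbf{Step 1 (covering the components).} Since $M=\sum_j Am_j$ and each $m_j$ is homogeneous, the degree-$n$ component satisfies
\[ M_n=\sum_{j\,:\,d_j\le n} A_{n-d_j}\,m_j . \]
Indeed, a homogeneous element of degree $n$ can be written as $\sum_j a_j m_j$. Taking the degree-$n$ part of each $a_j m_j$ retains only the component of $a_j$ of degree $n-d_j$. Hence
\[ \dim M_n\le \sum_{j=1}^s \dim A_{n-d_j}. \]
\textbf{Step 2 (monotonicity).} We now use the hypothesis $\dim A_{k+1}\ge \dim A_k$. Since $n-d_j\le n$, it gives $\dim A_{n-d_j}\le \dim A_n$, and therefore
\[ \dim M_n\le s\,\dim A_n \quad\text{for all } n. \]
Taking $n$-th roots gives $\sqrt[n]{\dim M_n}\le \sqrt[n]{s}\,\sqrt[n]{\dim A_n}$. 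Since $\sqrt[n]{s}\to 1$, passing to the $\limsup$ yields $\H(M)\le \H(A)$.

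Monotonicity is not actually essential: without it, each term $\dim A_{n-d_j}$ contributes growth rate $\limsup \sqrt[n]{\dim A_{n-d_j}}=\H(A)$, because the index shift does not change the radius of convergence of the series in Remark~\ref{radius-of-convergence}. One could then finish with Lemma~\ref{lem:limsup_n-throot(an+bn)_leq_max(a,b)}, or alternatively use the graded epimorphism $\bigoplus_j A(-d_j)\to M$ together with Lemma~\ref{lem:H_epi_mono} and Proposition~\ref{prop:H(oplus Mi)=max H(M_i)}. The monotonicity hypothesis simply makes the estimate direct.

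\textbf{Main obstacle.} The only point needing care is Step 1, namely that $M$ admits finitely many \emph{homogeneous} generators and that the degree-$n$ piece is exactly covered by $\sum_j A_{n-d_j}m_j$. There is also a small technicality when $\dim A_n=0$ for some $n$, where the $n$-th root is $0$. This does not affect the argument: with $A$ unital and increasing dimensions, $\dim A_n\ge \dim A_0\ge 1$, so all the quantities involved are positive.
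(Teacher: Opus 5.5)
Your proof is correct and follows the paper's argument: the paper phrases your Step 1 as a graded epimorphism $\bigoplus_{i} A(-d_i)\to M$, and then uses monotonicity to get $\dim M_n\le \ell\,\dim A_n$ before taking $n$-th roots, just as you do. Your side remark is also right. Shifting the index does not change $\limsup\sqrt[n]{\cdot}$, so the bound $\dim M_n\le\sum_j \dim A_{n-d_j}$ already gives the conclusion, and the monotonicity hypothesis can be dropped.
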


\begin{proof}
    Let $M$ be generated by homogeneous elements $m_1, \ldots, m_\ell$ of homogeneous degrees $d_1, \ldots, d_\ell$. Let $A(-d)$ be the \emph{module} $A$ with shift on its grading: $A(-d)_n=A_{n-d}$ -- we set $A_{n-d}=0$ if $ n<d$. We have a graded epimorphism of $A$-modules $\bigoplus_{i=1}^\ell A(-d_i) \rightarrow M$ given by $a_i\mapsto a_im_i$ for each $a_i\in A(-d_i)$. For each $n$, $\dim M_n \leq \sum_{i=1}^\ell \dim A(-d_i)_n \leq \ell \dim A_n$. So $\H(M) \leq \limsup_{n \to \infty} \sqrt[n]{\ell \dim A_n} = \H(A)$.
\end{proof}

\begin{defn}
    Let $M$ be a finitely generated left $A$-module, where $A$ is an affine algebra. Choose a finite-dimensional space $F \subset M$ that generates it as a module, and let $V$ be a frame for $A$. Let $d_{F,V}(n)=\dim \, V^n F$. The \emph{Gelfand-Kirillov dimension} of $M$ is
    \[ \GK( M)=\GK_A( M)=\limsup_{n \to \infty} \, \log_n \, d_{F,V}(n), \]
    and the value is independent of the choices of $F, V$ which can proved by an argument similar to \cite[Proof of Lemma 1.1]{Krause_Lenagan:Book2000_Growth:ofAlgebrasandGelfandKirillovDimension}.
\end{defn}

\begin{rem}
    For a finitely generated  graded $A$-module $M$, $M$ is finite dimensional if and only if $\H(M)=0$ if and only if $\GK(M)=0$. 
\end{rem}

\begin{thm}[$\GK$--Entropy Dichotomy] \label{entropy-modules-dichotomy}
    Let $M$ be a finitely generated infinite-dimensional graded $A$-module for an affine graded algebra $A$.
    
    (a) If $\H(M) > 1$, then $\GK(M)=\infty$.


    (b) If $\GK(M) \in (0, \infty)$, then $\H(M)=1$. 
    
\end{thm}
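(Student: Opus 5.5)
Both parts will follow from one comparison: the GK-dimension of $M$ computed with a well-chosen frame and generating subspace equals the polynomial growth rate of the partial sums $\sum_{i\le n}\dim M_i$. Part (b) is then the contrapositive of (a) combined with Proposition \ref{entropy-gap}.

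\textbf{Setup.} Since $A$ is affine and finitely graded, choose finitely many homogeneous generators of $A$, all of degree at most $c$. Let $V=\K+\span\{\text{these generators}\}$, so $V\subseteq A_{(c)}$. Since $M$ is finitely generated, choose homogeneous generators $m_1,\dots,m_\ell$ of degrees at most $d$, and let $F$ be their span, so $F\subseteq M_{(d)}:=\bigoplus_{i\le d}M_i$. Then $V^nF\subseteq M_{(cn+d)}$, hence
\[ d_{F,V}(n)\le \dim M_{(cn+d)}. \]
Conversely, every homogeneous element of $M$ of degree at most $n$ is a combination of products $x_{i_1}\cdots x_{i_k}m_j$ with $k\le n$, because each generator of $A$ of positive degree raises the degree by at least $1$. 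The degree-$0$ generators could be a problem, but $A_0$ is finite dimensional, so we may include a basis of $A_0$ among the generators. Then products with many degree-$0$ factors all lie in the finite-dimensional space $A_0$, which is contained in $V$. With this choice,
\[ M_{(n)}\subseteq V^{2n+1}F. \]
I expect this reverse inclusion to be the one delicate point. If the bookkeeping of degree-$0$ factors fails, it suffices to have $M_{(n)}\subseteq V^{an+b}F$ for some constants $a,b$, which is what Proposition \ref{growth}(v) needs. Together with Proposition \ref{growth}(v), the two inclusions give
\[ \GK(M)=\limsup_{n\to\infty}\log_n \dim M_{(n)}. \]

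\textbf{Part (a).} Assume $\H(M)=\lambda>1$ and fix $1<\mu<\lambda$. By the definition of $\limsup$, there are infinitely many $n$ with $\dim M_n\ge \mu^n$. For those $n$ we get $\dim M_{(n)}\ge \mu^n$, so $\log_n\dim M_{(n)}\ge n\log\mu/\log n\to\infty$. This argument only uses a subsequence and does not need the limit in Theorem \ref{convergence-theorem}. Hence $\GK(M)=\infty$.

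\textbf{Part (b).} Since $M$ is infinite dimensional, Proposition \ref{entropy-gap} gives $\H(M)\ge 1$. If $\H(M)>1$, part (a) would force $\GK(M)=\infty$, contradicting $\GK(M)\in(0,\infty)$. Therefore $\H(M)=1$.

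An alternative route for (b) is to note that $\GK(M)=\kappa<\infty$ gives $\dim M_n\le\dim M_{(n)}\le n^{\kappa+1}$ for $n\gg0$, so $\sqrt[n]{\dim M_n}\le n^{(\kappa+1)/n}\to1$. Again $\H(M)\le 1$, and the reverse inequality comes from Proposition \ref{entropy-gap}.
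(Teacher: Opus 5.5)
Your proposal is correct and follows the paper's proof: both reduce to $\GK(M)=\limsup_{n\to\infty}\log_n\dim\bigoplus_{i\le n}M_i$, then get (a) from the infinitely many $n$ with $\dim M_n\ge\mu^n$ for some $\mu>1$, and (b) from (a) together with Proposition \ref{entropy-gap}. The only difference is that the paper cites Lemma 6.1(b) of Krause--Lenagan for that identity, whereas you prove it directly from the two inclusions $V^nF\subseteq M_{(cn+d)}$ and $M_{(n)}\subseteq V^{2n+1}F$; your treatment of degree-$0$ factors is sound, since each block of them collapses into $A_0\subseteq V$, so a product with $p\le n$ positive-degree factors lies in $V^{2p+1}\subseteq V^{2n+1}$.
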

\begin{proof}
We consider the sequences $\{b_n\}_{n\geq 0}$ and $\{c_n\}_{n\geq0}$ where $b_n=\dim M_n$ and $c_n= \dim \bigoplus_{i=0}^n M_i$. By \cite[Lemma 6.1(b)]{Krause_Lenagan:Book2000_Growth:ofAlgebrasandGelfandKirillovDimension}, $\GK(M)= \limsup  \log_n c_n$,

(a) By assumption, for infinite values of $n$, $\sqrt[n]{b_n}>C$ for some $C > 1$, and since $c_n \geq b_n$, the same holds for $c_n$. But for infinitely many $n$, 
$\log_nc_n>\frac{\log C^n}{\log n}=\frac{n\log C}{\log n}$. Since $\lim_{n \to \infty} \frac{n}{\log n} = \infty$, $\GK(M)= \limsup \log_n c_n= \infty$.


(b)  By (a), it follows that $\H(M)\leq 1$, and by Proposition \ref{entropy-gap}, $\H(M)\geq 1$. Hence, $\H(M)=1$.

\end{proof}

\begin{thm}\label{A-has-exponential-growth-2}
    If $A$ is an affine graded algebra, then $\H(A)>1$ implies that $A$ has exponential growth. Hence, if $\GK(A) < \infty$, $\H(A)=1$ or $0$.
\end{thm}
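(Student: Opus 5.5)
The plan is to reduce to Theorem \ref{theo:h(A)>0_G(A)exponential}, applied to the natural filtration of $A$. Let $A=\bigoplus_{n\ge 0}A_n$ be affine and finitely $\mathbb{N}$-graded, and let $\F=\{A_{(n)}\}_{n\geq 0}$ with $A_{(n)}=\bigoplus_{i=0}^n A_i$. Each $A_{(n)}$ is finite dimensional, so $\F$ is a finite-dimensional filtration. (Strictly, Definition \ref{def:filtration} asks for $V_0=0$; a shift of indices by one repairs this and changes neither the growth nor the exponential rates.)

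The first step is to identify $\gr_\F(A)$ with $A$ as graded algebras. Indeed $A_{(n)}/A_{(n-1)}\cong A_n$, and the product in $\gr_\F(A)$ agrees with the graded product of $A$. Hence $\dim(A_{(n)}/A_{(n-1)})=\dim A_n$, and therefore
\[
\halg(A,\F)=\limsup_{n\to\infty}\frac{\log \dim A_n}{n}=\log\Big(\limsup_{n\to\infty}\sqrt[n]{\dim A_n}\Big)=\log \H(A),
\]
which is the content of Remark \ref{entropy-bridges}. Here $A$ is infinite dimensional, since $\H(A)>1\neq 0$ (Remark \ref{H=0}), so the second case of the definition of $\halg$ applies. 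Consequently $\H(A)>1$ gives $\halg(A,\F)>0$, and Theorem \ref{theo:h(A)>0_G(A)exponential} yields that $\G(A)$ is exponential.

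As a direct alternative, one can use $\dim A_{(n)}\ge \dim A_n$. For infinitely many $n$ this gives $\dim A_{(n)}>C^n$ for some $C>1$. Theorem \ref{convergence-theorem} shows that $\lim (\dim A_{(n)})^{1/n}$ exists, so the bound $\dim A_{(n)}> C'^n$ holds for all large $n$, with some $C'>1$. Theorem \ref{GKdim-filtered} then gives $\G(A)=\G(\dim A_{(n)})\ge \mathcal{E}_1$. The reverse inequality $\G(A)\le\mathcal{E}_1$ holds for any affine algebra, because $\dim V^n\le (\dim V)^n$. The only delicate point is upgrading the limsup information to a bound valid for all large $n$, and Theorem \ref{convergence-theorem} supplies exactly this.

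For the second sentence, suppose $\GK(A)<\infty$. Then $A$ cannot have exponential growth, so by the first part $\H(A)\le 1$. If $A$ is finite dimensional, then $\H(A)=0$ by Remark \ref{H=0}. Otherwise $A$, viewed as a graded module over itself, gives $\H(A)\ge 1$ by Proposition \ref{entropy-gap}. Hence $\H(A)\in\{0,1\}$.
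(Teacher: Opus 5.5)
Your argument is correct, but it follows a different route from the paper's, and the difference matters. The paper views $A$ as a finitely generated graded module over itself and cites the proof of Theorem \ref{entropy-modules-dichotomy}(a). Read literally, that argument only yields $\dim A_{(n)}>C^n$ for infinitely many $n$, i.e.\ $\GK(A)=\infty$. This suffices for the second sentence, but it leaves the upgrade to $\G(A)=\mathcal{E}_1$ implicit. The upgrade is not automatic: $\mathcal{E}_1\le\G(A)$ needs $e^n\le c\,\dim A_{(mn)}$ for \emph{all} $n$, which monotonicity alone cannot extract from a sparse subsequence.

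You instead pass to the natural filtration, where $\gr_\F(A)\cong A$, and use Theorem \ref{convergence-theorem} to make the lower bound eventual. You do this either directly or through Theorem \ref{theo:h(A)>0_G(A)exponential}, whose proof is precisely your \emph{direct alternative}. You also check $\G(A)\le\mathcal{E}_1$. So you genuinely prove exponential growth. The paper's version buys brevity and uniformity with the module case, where no analogue of Theorem \ref{convergence-theorem} is available.

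One caution: your reliance on Theorems \ref{theo:h(A)>0_G(A)exponential} and \ref{GKdim-filtered} is safe only because the filtration is the natural one. Both need a hypothesis such as $\gr_\F(A)$ being affine. For example, $\K[x]$ with $V_n$ the polynomials of degree at most $2^n$ for $n\ge 1$ has $\halg>0$ yet polynomial growth. In your setting the hypothesis holds, since $\gr_\F(A)\cong A$, and with $V=A_{(c)}$ one has $V^n\subseteq A_{(cn)}$ and $A_{(n)}\subseteq V^{2n+1}$.
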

\begin{proof}
Note that clearly $A$ is a finitely generated graded module over itself. The claim follows from the proof of Theorem \ref{entropy-modules-dichotomy}(a). \end{proof}




In an analogous way to the definition of algebraic entropy of a filtered algebra in \cite{Bock_Canto_Barquero_Gonzalez_Campos_Sebandal:2024_AlgebraicEntropyofPathAlgebrasandLeavittPathAlgebrasofFiniteGraphs}, we also have

\begin{defn}[Algebraic Entropy of a Filtered Module] Let $(A, \mathcal{F})$ be a filtered algebra with a quotient-finite dimensional filtration $\mathcal{F}=\{ V_n\}_{n\geq 0}$. Let $M$ be an $A$-module with a quotient-finite dimensional filtration $\Omega=\{ M_n \}_{n\geq 0}$. 
The \emph{algebraic entropy} of $(M, \Omega)$ is defined to be
\[
\halg(M, \Omega):= \begin{cases}
    0,  & \text{if }M \text{ is finite dimensional},\\
    \displaystyle\limsup_{n\rightarrow \infty}  \frac{\log\dim(M_n/M_{n-1})}{n},  & \text{otherwise}.
\end{cases}
\]
If the context is clear, for a filtered module $(M, \Omega)$, we simply write $\halg(M, \Omega)= \halg(M)$.
\end{defn}

It is clear that if $(M, \Omega)$ is a quotient-finite dimensional filtered module over a quotient-finite filtered algebra $(A,\F)$, $\halg(M, \Omega)=\log H(\gr_\Omega (M)) $ (cf. Remark \ref{entropy-bridges}). In particular, if $\halg(M, \Omega)>0$, then $H(\gr_\Omega (M))  > 1$, and so, by Theorem \ref{entropy-modules-dichotomy}(a):

\begin{cor}
  Let $(M, \Omega)$ be a finitely generated filtered $A$-module, where $(A, \F)$ is a quotient-finite dimensional filtered algebra and $\Omega$ is a quotient-finite dimensional filtration on $M$.  If $\halg(M,\Omega)>0$ then $\GK(M) = \infty$.
\end{cor}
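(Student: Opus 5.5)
The plan is to pass to the associated graded module and then apply Theorem \ref{entropy-modules-dichotomy}(a). Let $\F=\{V_n\}_{n\geq 0}$ and $\Omega=\{M_n\}_{n\geq 0}$, and write $R=\gr_\F(A)$ and $N=\gr_\Omega(M)$. Since $\halg(M,\Omega)>0$, the module $M$ is infinite dimensional. By the identity $\halg(M,\Omega)=\log \H(N)$ noted just before the statement (the module analogue of Remark \ref{entropy-bridges}), we get $\H(N)=e^{\halg(M,\Omega)}>1$. Theorem \ref{entropy-modules-dichotomy}(a) needs $N$ to be a finitely generated graded module over an affine graded algebra; granting this for the moment, it gives $\GK_R(N)=\infty$.

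\textbf{Step 1: hypotheses of the dichotomy.} I read the standing assumptions as $A$ affine with a finite-dimensional filtration, as in Theorem \ref{theo:h(A)>0_G(A)exponential}, and I would state this explicitly. Then $R$ is finitely graded, and it is affine because it is generated by the images of finitely many elements of $V_1,\dots,V_c$ for some $c$. For $N$, choose a finite generating set of $M$ lying in some $M_k$ and take the filtration generated by it, i.e.\ $\Omega'$ with $M'_n=\sum_i V_i F_{n-i}$, where $F_j$ is the finite-dimensional space of generators of filtration degree $j$. Its associated graded module is finitely generated over $R$. So the main technical point is to replace $\Omega$ by a good filtration $\Omega'$ without decreasing the entropy.

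\textbf{Step 2: comparison.} This is the main obstacle. For a good filtration, each $M'_n\subseteq M_{n+k}$ and $M_n\subseteq M'_{pn+q}$ for suitable constants. However, entropy is only invariant under such comparisons in a limited sense: a rescaling $n\mapsto pn$ changes the exponential rate. The route I would try first is to argue through cumulative dimensions. Since $\dim M_n\geq \dim(M_n/M_{n-1})$ and $\halg>0$, there is some $C>1$ with $\dim M_n>C^n$ for infinitely many $n$. Hence $\dim M'_{pn+q}>C^n$ for infinitely many $n$, so $\dim M'_m$ grows exponentially along a subsequence. This gives $\H(\gr_{\Omega'}M)>1$ via Lemma \ref{lem:limsup_n-throot(an+bn)_leq_max(a,b)}, because the partial sums of the graded pieces of $\gr_{\Omega'}M$ have the same exponential rate as the pieces themselves. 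Alternatively, one can bypass $N$ entirely and conclude directly from these dimension bounds.

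\textbf{Step 3: direct conclusion.} The shortest route is really the direct one, mimicking the proof of Theorem \ref{entropy-modules-dichotomy}(a). With a frame $V\subseteq V_c$ of $A$ and a generating space $F\subseteq M_k$, we have $V^nF\subseteq M_{cn+k}$. Conversely, $M'$-comparison gives $M_n\subseteq V^{pn+q}F$, so $d_{F,V}(pn+q)\geq \dim M_n>C^n$ for infinitely many $n$. Therefore $\log_m d_{F,V}(m)$ is unbounded along a subsequence, so $\GK(M)=\infty$. The delicate inclusion $M_n\subseteq V^{pn+q}F$ is what I would need to justify. Under a finite-dimensional filtration, $M_n$ is finite dimensional and contained in some $V^{j}F$, but a linear bound on $j$ requires $\Omega$ to be compatible with a good filtration. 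I would add that hypothesis (e.g.\ $\Omega$ standard/good: $M_n=\sum_i V_iM_{n-i}$ with $M$ generated by $M_{k}$) if it is not implied by the paper's conventions.
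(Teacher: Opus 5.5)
You have found the real obstruction, and it cannot be removed under the stated hypotheses: the corollary is false as written. Take $A=\K[x]$ with $V_n$ the polynomials of degree at most $n$ ($n\geq 1$), and $M=A$ with $M_0=0$ and $M_n$ the polynomials of degree at most $2^n$ ($n\geq 1$). Since $i+2^j\leq 2^{i+j}$ for $i,j\geq 1$, we have $V_iM_j\subseteq M_{i+j}$. Here $M$ is cyclic, $\Omega$ is finite dimensional, and $\dim(M_n/M_{n-1})=2^{n-1}$ for $n\geq 2$. Hence $\halg(M,\Omega)=\log 2>0$, yet $\GK(M)=1$.

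The paper's proof is exactly your opening paragraph: $\halg(M,\Omega)=\log\H(\gr_\Omega M)>0$, followed by Theorem~\ref{entropy-modules-dichotomy}(a) applied to $\gr_\Omega(M)$. It tacitly assumes three things: that $\gr_\F(A)$ is affine, that $\gr_\Omega(M)$ is finitely generated over it, and that $\GK(\gr_\Omega M)=\infty$ forces $\GK(M)=\infty$. None of these follows from the hypotheses. In the example, $\gr_\Omega(M)$ cannot be finitely generated, since otherwise the lifting argument below would force $\dim M_n$ to grow at most linearly.

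Two of your intermediate claims fail for the same reason. In Step~1, $\gr_\F(A)$ need not be affine for an arbitrary finite-dimensional filtration; take $V_n$ to be the polynomials of degree at most $2^n$ on $\K[x]$. In Step~2, the inclusion $M_n\subseteq M'_{pn+q}$ is not automatic. It is precisely the inclusion you later call delicate, and in the example above it fails for every good filtration $\Omega'$, because then $\dim M'_m$ is linear in $m$.

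Your proposed repair also needs a condition on $\F$, not only on $\Omega$. Take $V_n$ the polynomials of degree at most $2^n$, $M=A$, $M_1=\K$ and $M_n=\sum_{i\geq 1}V_iM_{n-i}$ for $n\geq 2$. Then $M_n$ is the space of polynomials of degree at most $2^{n-1}$, so $\Omega$ is good and $\halg(M,\Omega)=\log 2$, and still $\GK(M)=1$.

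A correct repair is one of the following: assume $\gr_\F(A)$ is affine and $\gr_\Omega(M)$ is finitely generated (exactly what the paper needs to invoke the theorem), or assume $\F$ is standard and $\Omega$ is good. Then lift finitely many homogeneous generators to a frame $W\ni 1$ of $A$ and a finite-dimensional generating space $F$ of $M$. Induction on $n$ gives $M_n\subseteq W^{an+b}F$ for constants $a,b$. Hence $d_{F,W}(an+b)\geq \dim(M_n/M_{n-1})>C^n$ for infinitely many $n$, and your Step~3 concludes. This direct route is cleaner than the paper's, since the same inclusion is what one needs anyway to pass from $\GK(\gr_\Omega M)=\infty$ back to $\GK(M)=\infty$.
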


We can translate our other results for $\H(M)$ to results about $\halg(M)$.

\begin{thm}\label{theo:h(+Mi)=maxh(M_i)}
If $\{(M_i, \Omega_i)\}_{i=1}^n$ is a collection of quotient-finite dimensional filtered modules over a quotient-finite dimensional affine filtered algebra $(A, \F)$. Then we have the following:
\begin{enumerate}
    \item 
[(a)]
$\halg(\bigoplus_{i=1}^n M_i)=\max \{\halg(M_i)\}_{i=1}^n$.

\item [(b)] If $\dim(\gr_\F(A)_{n})\leq \dim(\gr_\F(A)_{n+1})$ for every $n$, 
 then $\halg(A,\F) \geq \halg(M_i, \Omega_i)$ for each $i=1,2,\dots, n$.
 \end{enumerate}
\end{thm}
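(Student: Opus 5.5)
The plan is to reduce both parts to the corresponding statements for graded modules via the identity $\halg(M,\Omega)=\log \H(\gr_\Omega(M))$, which holds whenever $M$ is infinite dimensional (the remark following the definition of algebraic entropy of a filtered module, cf. Remark \ref{entropy-bridges}). Finite-dimensional summands, where $\halg=0$ by convention while $\H=0$, will be handled separately.

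For (a), I give $\mathcal{M}=\bigoplus_{i=1}^n M_i$ the direct-sum filtration $\Omega$ with $\mathcal{M}_k=\bigoplus_i (M_i)_k$. This is a filtration compatible with $\F$, and it is quotient-finite dimensional because $\mathcal{M}_k/\mathcal{M}_{k-1}\cong\bigoplus_i (M_i)_k/(M_i)_{k-1}$. In particular $\gr_\Omega(\mathcal{M})\cong\bigoplus_i \gr_{\Omega_i}(M_i)$ as graded $\gr_\F(A)$-modules.

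If every $M_i$ is finite dimensional, then $\mathcal{M}$ is finite dimensional and both sides of (a) vanish. Otherwise $\mathcal{M}$ is infinite dimensional. I then split into two cases.

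\emph{Case $\halg(\mathcal{M})>0$.} By Proposition \ref{prop:H(oplus Mi)=max H(M_i)}, or directly by Lemma \ref{lem:limsup_n-throot(an+bn)_leq_max(a,b)} applied to the dimensions of the graded pieces, $\H(\gr_\Omega\mathcal{M})=\max_i \H(\gr_{\Omega_i}M_i)$. Taking logarithms, the maximum is attained at an infinite-dimensional $M_i$, since the finite-dimensional ones contribute $\H=0<1$. For those $M_i$ the identity $\halg=\log\H$ applies, which gives (a).

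\emph{Case $\halg(\mathcal{M})=0$.} Proposition \ref{entropy-gap} shows each infinite-dimensional summand has $\H\ge 1$, so $\halg(M_i)\ge 0$. Since $\H(\gr M_i)\le\H(\gr\mathcal{M})=1$, we get $\halg(M_i)=0$ for every $i$, so both sides are $0$.

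The only delicate point is how the $\log$ of $\dim=0$ pieces is interpreted in the limsup. I would read $\log 0$ as $-\infty$, which is harmless because infinitely many pieces are nonzero.

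For (b), the hypothesis says exactly that the graded algebra $R=\gr_\F(A)$ satisfies $\dim R_{n+1}\ge\dim R_n$, so Proposition \ref{like-GK-2} gives $\H(\gr_{\Omega_i}M_i)\le\H(R)$, provided $\gr_{\Omega_i}M_i$ is a finitely generated graded $R$-module.

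This is the main obstacle. The statement does not explicitly assume the $M_i$ are finitely generated, and even finite generation of $M_i$ over $A$ does not automatically give finite generation of $\gr_{\Omega_i}M_i$ over $R$ for an arbitrary filtration. I would add the natural hypothesis, implicit in the comparison with Proposition \ref{like-GK-2}, that $\gr_{\Omega_i}M_i$ is finitely generated. An example is a standard filtration $(M_i)_k=V_kF$ with $F$ a finite-dimensional generating space. Alternatively, I would bypass $\gr$ entirely by bounding $\dim (M_i)_k/(M_i)_{k-1}\le \sum_j \dim R_{k-d_j}\le \ell\dim R_k$, as in that proof, whenever $(M_i)_k=\sum_j V_{k-d_j}m_j$.

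With that bound, the conclusion follows by cases.

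\emph{If $M_i$ is finite dimensional,} then $\halg(M_i)=0\le\halg(A)$.

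\emph{If $M_i$ is infinite dimensional and $A$ is infinite dimensional,} take logarithms of the inequality of entropies.

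\emph{If $M_i$ is infinite dimensional but $A$ is finite dimensional,} then $M_i$ would be a finitely generated module over a finite-dimensional algebra, hence finite dimensional. This contradiction closes the argument.
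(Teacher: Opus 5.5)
Your proposal is essentially the paper's proof: the paper also transports the direct-sum proposition and Proposition~\ref{like-GK-2} to $\gr_\F(A)$ and $\gr_{\Omega_i}(M_i)$ via $\halg=\log\H(\gr)$, and your separate handling of finite-dimensional $M_i$ or $A$ (where $\halg=0$ but $\H(\gr)=0$) fills in cases the paper silently skips. The extra hypothesis you add in (b) is genuinely necessary: the paper invokes Proposition~\ref{like-GK-2} without $\gr_{\Omega_i}(M_i)$ being finitely generated, and (b) fails even for cyclic modules, as the following example shows. Take $A=\K[x]$ with $V_n$ the polynomials of degree $<n$, so $\halg(A,\F)=0$ and the monotonicity hypothesis holds, and take $M=A$ with $M_0=0$ and $M_k=V_{2^{k-1}}$; then $V_iM_j\subseteq M_{i+j}$, $\dim M_k/M_{k-1}=2^{k-2}$ for $k\ge 2$, and $\halg(M)=\log 2$.
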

\begin{proof} Since  $\log \H(\gr_{\mathcal{F}}(A))=\halg(A, \F)$ and  $\log \H(\gr_{{\Omega_i}}(M_i))= \halg(M_i, \Omega_i)$, the results follow from Propositions \ref{prop:H(oplus Mi)=max H(M_i)} and \ref{like-GK-2} applied to  $\gr_{\mathcal{F}}(A)$ and  $ \gr_{{\Omega_i}}(M_i)$. 
\end{proof}

\section{ Gelfand-Kirillov superdimension}

\begin{defn}[Gelfand-Kirillov superdimension, \cite{Borho-Kraft:1976_ÜberdieGelfand-Kirillov-Dimension}]
Let $A$ be an affine algebra and $V$ be a finite-dimensional vector space containing the unit $1$. Let $d_V(n):=\dim V^n$. Then the \emph{Gelfand-Kirillov superdimension} of $A$ is
\[\DIM(A):= \limsup_{n\to\infty} \frac{\log \log d_V(n)}{\log n}.\]
Its value is independent of the choice of $V$.
\end{defn}

By \cite[$\S$ 2.3, 2.16]{Borho-Kraft:1976_ÜberdieGelfand-Kirillov-Dimension}, for any affine algebra $A$, $0 \leq \DIM  (A) \leq 1$. Moreover, for any $0\leq r \leq 1$, there exists an affine algebra $B$ such that $\DIM (B)=r$.

Now, let us recall a useful lemma for the Gelfand-Kirillov superdimension.  

\begin{lem}[{\cite[Lemma 1.5(b)]{Borho-Kraft:1976_ÜberdieGelfand-Kirillov-Dimension}\label{lemma-superdimension}}]
    Let $f \in \Phi$. Then 
    \[\displaystyle \limsup_{n\to\infty} \frac{\log \log f(n)}{\log n} = \inf\{\varepsilon \in \mathbb{R} \mid f(n) \le e^{n^\varepsilon} \textnormal{ for } n\gg 0 \}.\]
    
\end{lem}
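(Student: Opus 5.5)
We prove the identity by showing two inequalities. Write $L:=\limsup_{n\to\infty} \frac{\log\log f(n)}{\log n}$ and let
\[
S:=\{\varepsilon\in\mathbb{R}\mid f(n)\le e^{n^\varepsilon}\text{ for } n\gg 0\},\qquad I:=\inf S ,
\]
with the conventions $\inf\emptyset=+\infty$ and, where needed, $\log\log f(n)=-\infty$ when $f(n)\le 1$. Since $f\in\Phi$, we have $f(n)>0$ for $n\gg 0$ and $f$ is eventually nondecreasing, so all the expressions make sense for large $n$. The underlying equivalence is that, for $n\ge 2$ and $f(n)>1$,
\[
f(n)\le e^{n^\varepsilon}\iff \log f(n)\le n^\varepsilon \iff \frac{\log\log f(n)}{\log n}\le \varepsilon .
\]
When $f(n)\le 1$, the inequality $f(n)\le e^{n^\varepsilon}$ holds automatically for every $\varepsilon$, and the quotient is $-\infty$, so the equivalence still holds.

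\emph{Proof of $L\le I$.} If $S=\emptyset$ there is nothing to prove. Otherwise, take $\varepsilon\in S$. Then for $n\gg 0$ we have $f(n)\le e^{n^\varepsilon}$, so the equivalence gives $\frac{\log\log f(n)}{\log n}\le\varepsilon$ for $n\gg 0$. Hence $L\le\varepsilon$, and taking the infimum over $\varepsilon\in S$ gives $L\le I$.

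\emph{Proof of $I\le L$.} If $L=+\infty$ there is nothing to prove. Otherwise, take any real $\varepsilon>L$. By the definition of $\limsup$, we have $\frac{\log\log f(n)}{\log n}<\varepsilon$ for $n\gg0$, so the equivalence gives $f(n)\le e^{n^\varepsilon}$ for $n\gg 0$. Thus $\varepsilon\in S$ and $I\le\varepsilon$. Letting $\varepsilon\downarrow L$ yields $I\le L$. This also covers $L=-\infty$: then every real $\varepsilon$ lies in $S$, so $I=-\infty$.

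The only delicate point is bookkeeping at the edges: the terms with $f(n)\le 1$, where $\log\log$ is undefined or negative, and the infinite values. The conventions above take care of these, because $S$ is upward closed ($n^\varepsilon$ increases in $\varepsilon$ for $n\ge 1$). The monotonicity part of the hypothesis $f\in\Phi$ is not needed; only eventual positivity is used.
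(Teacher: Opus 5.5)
Your proof is correct. The paper gives no argument of its own here; it only cites Borho--Kraft. Your route is the standard direct verification: reduce to the pointwise equivalence $f(n)\le e^{n^\varepsilon}\iff \frac{\log\log f(n)}{\log n}\le\varepsilon$ for $n\ge 2$, then prove $L\le I$ and $I\le L$ separately. You handle the terms with $f(n)\le 1$ soundly via the convention $\log\log f(n)=-\infty$, which the statement leaves implicit, and you treat the infinite values correctly. You are also right that only eventual positivity of $f$ is used.
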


Note that this lemma shows that an affine algebra $A$ has an exponential growth if and only if $\DIM(A)=1$.

We are going to see, in the next example, how $\DIM(\cdot)$ is a finer invariant than $\GK(\cdot)$ for algebras with subexponential growth (see also the discussion in \cite[$\oint$ 12.1]{Krause_Lenagan:Book2000_Growth:ofAlgebrasandGelfandKirillovDimension}).
\begin{exa}\label{Smith}
    In \cite{Smith:1976_UniversalEnvelopingAlgebraswithSubexponentialbutnotPolynomiallyBoundedGrowth}, it is shown that if $\mathfrak{g}$ is a finitely generated Lie algebra over a field of characteristic $0$, then $\GK(U(\mathfrak{g}))<\infty $  if and only if $\dim(\mathfrak{g})<\infty $, and that the growth of $U(\mathfrak{g})$ is subexponential when so is the growth of $\mathfrak{g}$. 
    
    As an example, let $\mathfrak{L}$ be the infinite-dimensional Lie algebra with basis $\{ x, y_1, y_2, \ldots \}$ and brackets given by $[x, y_i]=y_{i+1}, \, [y_i, y_j]=0$. Let $A=U(\mathfrak{L})$. Then $\mathcal{G}(A)=\mathcal{G}\big(\exp(\sqrt{n})\big)$ and the growth is subexponential. $\GK(U(\mathfrak{L}))=\infty$, so the Gelfand-Kirillov dimension gives us limited information. However, $\DIM(U(\mathfrak{L})) = \frac{1}{2}$. More generally, if $A$ is any affine algebra, $x$ a number in $(0,1)$, if $\G(A)=\mathcal{E}_x$, then $A$ has subexponetial growth, $\GK(A)=\infty$, but $\DIM(A)=x$. Hence, for this class of algebras, $\DIM(\cdot)$ is a finer invariant than $\GK(\cdot)$.
\end{exa}

\begin{prop}\label{proposition-superdimension}
    Let $A$ be an affine graded algebra with  $\DIM (A) < 1$. Then
    \[\H(A)=
    \begin{cases}
    $1$, &     \text{if } $A$ \text{ is infinite dimensional}\\
    $0$, & \text{otherwise.}
    \end{cases}
    \]
    \end{prop}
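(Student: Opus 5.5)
The finite-dimensional case is immediate from Remark \ref{H=0}: $\H(A)=0$ exactly when $A$ is finite dimensional. So we assume $A$ is infinite dimensional. Viewing $A$ as a finitely generated graded module over itself, Proposition \ref{entropy-gap} gives $\H(A)\geq 1$. The whole content is therefore the upper bound $\H(A)\leq 1$, and we prove it by contradiction.

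Suppose $\H(A)>1$. Theorem \ref{A-has-exponential-growth-2} then says that $A$ has exponential growth, so $\mathcal{G}(A)=\mathcal{E}_1$. The remark after Lemma \ref{lemma-superdimension} states that an affine algebra has exponential growth if and only if $\DIM(A)=1$. This contradicts the hypothesis $\DIM(A)<1$, so $\H(A)\leq 1$ and hence $\H(A)=1$.

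The one step that needs care is the passage from exponential growth to $\DIM(A)=1$, and I would verify it directly rather than only cite the remark. Fix a generating subspace $V\ni 1$ that contains the homogeneous generators; take $c$ to be the maximal degree of these generators. Then $A_{(n)}\subseteq V^{n}$, since a monomial of degree at most $n$ in generators of degree $\geq 1$ has at most $n$ factors. Degree-$0$ generators could break this counting, but $A_0$ is finite dimensional and can be absorbed into $V$, at worst changing constants.

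Now $\H(A)>1$ gives, for infinitely many $n$, $\dim A_n\geq C^n$ with $C>1$. For those $n$ we get $\log\log \dim V^n\geq \log n+\log\log C$. Dividing by $\log n$ and taking $\limsup$ yields $\DIM(A)\geq 1$, which contradicts $\DIM(A)<1$.

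The main obstacle is exactly this comparison between $\dim V^n$ and $\dim A_n$ when there are generators of degree $0$. The fallback is to use the independence of $\DIM$ from the choice of $V$ together with Lemma \ref{lemma-superdimension}: a bound $d_V(n)\leq e^{n^{\varepsilon}}$ with $\varepsilon<1$ would force $\dim A_n\leq e^{(cn)^{\varepsilon}}$ up to reparametrization, and hence $\H(A)\leq 1$.
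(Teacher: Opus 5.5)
Your proof is correct. Its headline route differs from the paper's, but your ``direct verification'' is in substance the paper's argument.

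\textbf{The two routes.} You argue by contradiction through Theorem~\ref{A-has-exponential-growth-2} and the implication ``exponential growth $\Rightarrow\DIM(A)=1$''. The paper never passes through growth classes. It sets $c=\DIM(A)<1$ and applies Lemma~\ref{lemma-superdimension} to $f(n)=\dim\bigoplus_{i\le n}A_i$ (justified by a citation to Krause--Lenagan). This gives $\dim A_n\le f(n)\le e^{n^{c+\varepsilon}}$ for $n\gg0$ with $c+\varepsilon<1$, and hence $\sqrt[n]{\dim A_n}\le e^{n^{c+\varepsilon-1}}\to1$. Your direct computation (if $\dim A_n\ge C^n$ infinitely often, then $\DIM(A)\ge1$) is exactly the contrapositive of this, and your fallback is literally the paper's argument.

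\textbf{Why the direct version is preferable.} Both $\DIM$ and $\H$ are defined by a $\limsup$, so a bound along a subsequence is enough. The headline route instead needs the full statement $\G(A)=\mathcal{E}_1$. The paper's proof of Theorem~\ref{A-has-exponential-growth-2} only produces such a subsequence. Upgrading it requires an extra step, e.g.\ Fekete's lemma for the submultiplicative sequence $\dim V^n$, or Theorem~\ref{convergence-theorem}.

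\textbf{Two smaller points.} First, the remark after Lemma~\ref{lemma-superdimension} is stated as an ``if and only if'', but you should invoke only the direction exponential growth $\Rightarrow\DIM(A)=1$. A growth function like $e^{n/\log n}$ has superdimension $1$ without being exponential. Second, make ``at worst changing constants'' precise. Take $V$ to be $A_0$ plus the span of the positive-degree homogeneous generators. Grouping maximal runs of degree-$0$ factors into elements of $A_0$ gives $A_n\subseteq V^{2n+1}$. Since $\log(2n+1)/\log n\to1$, your estimate goes through unchanged. This is the same comparison the paper delegates to its citation.
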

\begin{proof}
We introduce two functions on the natural numbers: $g(n)=\dim A_n$ and $f(n)= \dim \bigoplus_{i=0}^n A_i$. By \cite[Lemma 6.1(b)]{Krause_Lenagan:Book2000_Growth:ofAlgebrasandGelfandKirillovDimension}, and Lemma \ref{lemma-superdimension}, $\DIM (A)= \inf\{\varepsilon \in \mathbb{R} \mid f(n) \le e^{n^\varepsilon} \text{ for } n \gg 0\}$. Let $c = \DIM (A)$, and let $\varepsilon>0$ small enough such that $c + \varepsilon < 1$. Then for $n \gg 0$, $g(n) \leq f(n) \leq e^{n^{c+\varepsilon}}$. Hence 
\[\H(A) = \limsup_{n \to \infty} \sqrt[n]{g(n)} \leq \limsup_{n \to \infty} (e^{n^{c +\varepsilon})^{\frac{1}{n}}} =\limsup_{n \to \infty} e^{n^{c +\varepsilon -1}}.\] Let $d= c + \varepsilon -1 < 0$. Then $n^d \to 0$ as $n \to \infty $, and so, since the exponentiation function is continuous, $\limsup_{n \to \infty} e^{n^{c +\varepsilon -1}} \leq e^0=1$. So $\H(A) \leq 1$. Combining Proposition \ref{entropy-gap} and Remark \ref{H=0}, we obtain our result about $\H(A)$.
\end{proof}

\begin{cor}\label{cor:H(A)>1_DIM(A)=1}
    If $A$ is an affine graded algebra with $\H(A)>1$, then $\DIM(A)=1$.
\end{cor}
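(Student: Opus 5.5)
The corollary follows from Proposition \ref{proposition-superdimension} by contraposition, combined with the bound $0\le \DIM(A)\le 1$ of Borho--Kraft. Let $A$ be an affine graded algebra with $\H(A)>1$. Suppose, for contradiction, that $\DIM(A)\neq 1$. Since $0\leq \DIM(A)\leq 1$ for every affine algebra \cite[\S 2.3, 2.16]{Borho-Kraft:1976_ÜberdieGelfand-Kirillov-Dimension}, this forces $\DIM(A)<1$. Proposition \ref{proposition-superdimension} then applies and gives $\H(A)\in\{0,1\}$. This contradicts $\H(A)>1$, so $\DIM(A)=1$.

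Alternatively, one can argue directly through Theorem \ref{A-has-exponential-growth-2}. Since $\H(A)>1$, that theorem says $A$ has exponential growth. By the remark following Lemma \ref{lemma-superdimension}, an affine algebra has exponential growth exactly when $\DIM(A)=1$.

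To verify the direction needed from that remark, fix $C>1$ and infinitely many $n$ with $\dim A_n>C^n$. Take a generating space $V$ containing $1$ with $V\supseteq A_{(c)}$, where $c$ is such that $A_{(c)}$ generates $A$. Then $A_n\subseteq V^n$, so $d_V(n)\geq \dim A_n>C^n$ for infinitely many $n$. Hence no $\varepsilon<1$ satisfies $d_V(n)\le e^{n^\varepsilon}$ for all $n\gg0$. By Lemma \ref{lemma-superdimension} this gives $\DIM(A)\geq 1$, and so $\DIM(A)=1$.

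No step is a real obstacle; the only thing to check is that the frame $V$ contains $A_n$ in $V^n$, which holds because $1\in V$ and $A_{(c)}\subseteq V$. I would present the first, contrapositive argument as the proof, since it is a single line given Proposition \ref{proposition-superdimension}.
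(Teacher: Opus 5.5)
Your contrapositive argument is correct and is exactly how the paper obtains this corollary: the paper states it immediately after Proposition~\ref{proposition-superdimension}, as a direct consequence of that proposition together with the Borho--Kraft bound $0\le \DIM(A)\le 1$. Your alternative direct check is also sound. The inclusion $A_n\subseteq V^n$ for $n\ge 1$ holds once degree-zero factors are absorbed into adjacent positive-degree ones, and that route only needs $\dim A_n>C^n$ for infinitely many $n$, so it avoids the notion of exponential growth altogether.
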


\begin{exa}
    There are algebras with 
   GK superdimension $1$,  but with different entropies. For instance, let $F_n(\K)$ be the free unital associative algebra in $n$ generators. Then by \cite[$\S$ 2.4]{Borho-Kraft:1976_ÜberdieGelfand-Kirillov-Dimension}, for any $n$, $\DIM (F_n(\K))=1$. However, $\H(F_n(\K))=n$ by \cite[Corollary 3.2]{Newman_Schneider_Shalev:2000_TheEntropyofGradedAlgebras}. Hence entropy is a finer invariant for algebras with exponential growth. 
\end{exa}

\subsection*{Acknowledgments}
The second author acknowledges Dr.~Christopher Bernido of the Research Center for Theoretical Physics (Central Visayan Institute Foundation) and Dr. Håkan Sollervall of Linnaeus University for hosting her during the research fellowships at their respective institutions, as well as Dr. Vyachelsav Futorny of Southern University of Science and Technology and Dr. Ardeline Mary Buhphang of North-Eastern Hill University for the research stays which allowed the completion of this paper.

\printbibliography[
  heading=bibintoc,
  title={References}
]

\bigskip
\bigskip

\noindent\textsc{João Schwarz: Shenzhen International Center for Mathematics, Southern University of Science and Technology, China}

\noindent\textit{Email address:} \texttt{jfschwarz.0791@gmail.com}
\bigskip

\noindent\textsc{Alfilgen Sebandal: Central Visayan Institute Foundation, Philippines and Linnaeus University, Sweden}

\noindent\textit{Email address:} \texttt{a.sebandal@rctpjagna.com}

\end{document}